\newtheorem{theorem}{Theorem}[section]
\newtheorem{lemma}[theorem]{Lemma}
\newtheorem{trditev}[theorem]{Proposition}
\newtheorem{posledica}[theorem]{Corollary}
\theoremstyle{definition}
\newtheorem{example}[theorem]{Example}
\theoremstyle{remark}
\newtheorem{remark}[theorem]{Remark}
\numberwithin{equation}{section}
\def\codim{\mathop{\rm codim}\nolimits}
\def\Orb{\mathop{\rm Orb}\nolimits}
\renewcommand\p@enumii{}
\newcommand{\doublewidetilde}[1]{{%
  \mathpalette\double@widetilde{#1}%
}}
\newcommand{\double@widetilde}[2]{%
  \sbox\z@{$\m@th#1\widetilde{#2}$}%
  \ht\z@=.9\ht\z@
  \widetilde{\box\z@}%
}
\renewcommand*\env@matrix[1][*\c@MaxMatrixCols c]{%
  \hskip -\arraycolsep
  \let\@ifnextchar\new@ifnextchar
  \array{#1}}
\begin{document}
\title[]
{
Isotropy groups of the action of orthogonal similarity on symmetric matrices 
}
\author{
Tadej Star\v{c}i\v{c}}
\address{Faculty of Education, University of Ljubljana, Kardeljeva Plo\v{s}\v{c}ad 16, 1000 Lju\-blja\-na, Slovenia}
\address{Institute of Mathematics, Physics and Mechanics, Jadranska
  19, 1000 Ljubljana, Slovenia}
\email{tadej.starcic@pef.uni-lj.si}
\subjclass[2000]{15A24, 51H30, 32M05
}
\date{August 13, 2021}


\keywords{isotropy groups, matrix equations, orthogonal matrices, symmetric matrices, unipotent group, Toeplitz matrices\\
\indent Research supported by grant P1-0291 
from ARRS, Republic of Slovenia.}

\begin{abstract} 
We find an algorithmic procedure that enables to compute and to describe the structure of the isotropy subgroups of the group of complex orthogonal matrices with respect to the action of similarity on complex symmetric matrices. 
A key step in our proof is to solve a certain rectangular block upper-triangular Toeplitz matrix equation. 
\end{abstract}

\maketitle

\vspace{-5mm}

\section{Introduction and the main result} 

All matrices considered in this paper are complex unless otherwise is stated. We use the notation $\mathbb{C}^{m\times n}$ for the set of matrices of size $m\times n$. 
By 
$S_n(\mathbb{C})$
we denote the vector space of all $n\times n$ symmetric matrices; $A$ is symmetric if and only if $A=A^{T}$. Let further $O_n(\mathbb{C})$ be the subgroup of orthogonal matrices in the group of nonsingular $n\times n$ matrices $GL_n(\mathbb{C})$. A matrix $Q$ is orthogonal if and only if $Q=(Q^{T})^{-1}$. 
The action of \emph{orthogonal similarity}
on $S_n(\mathbb{C})$ is defined as follows:
\begin{align}\label{aos}
\Phi\colon O_n(\mathbb{C})\times S_n(\mathbb{C})\to S_n(\mathbb{C}),\qquad (Q,A)\mapsto Q^{T}AQ.
\end{align}
%
The \emph{isotropy group} at $A\in S_n(\mathbb{C})$ with respect to the action (\ref{aos}) is 
%
\begin{equation}\label{isog}
\Sigma_A:=\{Q\in O_n(\mathbb{C})\mid Q^{T}AQ=A\}
\end{equation}
%
%
and the \emph{orbit} of $A$ is 
%
\begin{equation}\label{eqAQBs}
\Orb (A):=\{Q^{T}AQ \mid Q\in O_n(\mathbb{C})\}.  
\end{equation}
An orbit thus consists of orthogonally similar matrices and the isotropy groups of these matrices are isomorphic.

%
%
%

The action (\ref{aos}) 
describes symmetries of $S_n(\mathbb{C})$.
Hua's fundamental results \cite{Hua0,Hua1,Hua2} on the geometry of symmetric matrices assure that the study of symmetric matrices under $T$-congruence (which includes (\ref{aos})) is quite general.
An important information concerning a group action is provided by its orbits and the corresponding isotropy groups (see monographs \cite{GOV,Milne}), and to find these for the action (\ref{aos}) is the main purpose of this paper.
Moreover,  
the so-called linear isotropy representation at $A\in S_n(\mathbb{C})$ is the restricion of (\ref{aos}): 
\begin{equation}\label{tans}
\Sigma_A \times T_A \to T_A,\quad (Q,A)\mapsto Q^{T}AQ,\qquad T_A:= \{X^{T}A+AX\mid X=-X^{T}\in \mathbb{C}^{n\times n}\},
\end{equation}
%
a representation of $\Sigma_A$ on a 
complex vector space $T_A\subset S_n(\mathbb{C})$ associated to the tangent space of $\Orb(A)\subset S_n(\mathbb{C})$ at $A$ (see also Sec. \ref{PfTh}). 
It is closely related to invariant objects of $\Orb (A)$ (see \cite{GOV,KN}). 
%
On the other hand
(\ref{aos}) can be seen as a representation of $O_n(\mathbb{C})$; note that the classification of representations of complex classical groups along with their invariants is well understood (see e.g. \cite{Weyl}).
Finally, the isotropy groups of (\ref{aos}) are interesting from the linear algebraic point of view (check Remark \ref{OpI}).

To be able to compute the isotropy groups, it is essential to have simple representatives of orbits. 
Thus we recall the symmetric canonical form under similarity;
remember that symmetric matrices are similar if and only if they are orthogonally similar (see e.g. \cite{Gant}). 
Given a matrix $A$ with its Jordan canonical form:
\begin{equation}\label{JFs}
J(A)=\bigoplus_j J_{n_j}(\lambda_j), \qquad \lambda_j\in \mathbb{C}, 
\end{equation}
\vspace{-1mm}
where 
%
\begin{equation*}
     J_n(z):=\begin{bmatrix}
                                                      z    &  1       & \;     & 0    \\
						      \;     & z     & \ddots & \;    \\     
						      \;     & \;      & \ddots &  1     \\
                                                      0     & \;      & \;     & z   
                                   \end{bmatrix},\qquad z\in \mathbb{C}\qquad (n\textrm{-by-}n),
\end{equation*}
the symmetric canonical form is
\begin{equation}\label{NF1s}
S(A)=\bigoplus_{j}^{} K_{n_j}(\lambda_j),\qquad
\end{equation}
\vspace{-1mm}
in which
\begin{equation}\label{Sm}
K_n(z):=
\frac{1}{2}\left(
\begin{bsmallmatrix}
2z  & 1 &             & 0 \\
1 & \ddots &   \ddots        &   \\
   & \ddots  &  \ddots & 1 \\
0   &          & 1    & 2z \\
\end{bsmallmatrix}
+
i
\begin{bsmallmatrix}
0  &                &    -1  & 0\\
 &   \iddots           &   \iddots     & 1 \\
-1   &  \iddots  & \iddots &  \\
0   & 1          &     & 0 \\
\end{bsmallmatrix}
\right),\qquad z\in \mathbb{C}\qquad (n\textrm{-by-}n).
\end{equation}
It is uniquely determined up to a permutation of its direct summands. See \cite{Djok} for the tridiagonal symmetric canonical form.

Since the equation $Q^{T}AQ=A$ is equivalent to $(J(A))X=X(J(A))$ with $J(A)=PAP^{-1}$, $X=PQP^{-1}$, 
the following fact on isotropy groups follows immediately from the classical result on solutions of Sylvester's equation (see Theorem \ref{posls} (\ref{posls1})).

\vspace{-2mm}

\begin{trditev} \label{stabs11}
If $\lambda_1,\ldots,\lambda_k$ are distinct eigenvalues of  
$S=\bigoplus_{j=1}^{k}S_j$,
where each $S_j$ is a direct sum whoose summands are of the form (\ref{Sm}) and correspond to the eigenvalue $\lambda_j$,
it then follows that $\Sigma_{S}=\bigoplus_{j=1}^{k}\Sigma_{S_j}$. 
Furthermore, if $S_j=\lambda_j I_{n_j}$ for some index $j$, then $\Sigma_{S_j}=O_{n_j}(\mathbb{C})$. (We denote 
the $n\times n$ identity-matrix by $I_n$.)
\end{trditev}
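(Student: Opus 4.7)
My plan is to reduce the statement to the classical Sylvester theorem (Theorem~\ref{posls}(\ref{posls1})) cited above, together with a direct check in the scalar case.

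First I would rewrite the isotropy condition. Since every $Q\in O_n(\mathbb{C})$ satisfies $Q^{T}=Q^{-1}$, the equation $Q^{T}SQ=S$ is equivalent to $SQ=QS$. Hence $\Sigma_{S}$ is exactly the intersection of $O_n(\mathbb{C})$ with the commutant of $S$.

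Next I would pass to the Jordan form. Choose $P\in GL_n(\mathbb{C})$ with $J(S)=PSP^{-1}$ and write $J(S)=\bigoplus_{j=1}^{k}J_{j}$, where $J_{j}$ is the direct sum of Jordan blocks attached to $\lambda_j$. Via $X:=PQP^{-1}$, commutation with $S$ corresponds to commutation with $J(S)$. Because the spectra of $J_{1},\ldots,J_{k}$ are pairwise disjoint, the cited Sylvester result forces any such $X$ to be block-diagonal conformally with $\bigoplus_{j}J_{j}$; conjugating back, $Q=\bigoplus_{j=1}^{k}Q_j$ is block-diagonal conformally with $S=\bigoplus_{j=1}^{k}S_j$. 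Orthogonality of a block-diagonal matrix is equivalent to orthogonality of each diagonal block, and reading off the $j$-th diagonal block of $Q^{T}SQ=S$ yields $Q_{j}^{T}S_{j}Q_{j}=S_{j}$. This gives $\Sigma_S\subseteq \bigoplus_{j}\Sigma_{S_j}$; the reverse inclusion is immediate.

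Finally, for the scalar case, if $S_j=\lambda_j I_{n_j}$ then every $Q_j\in O_{n_j}(\mathbb{C})$ satisfies $Q_{j}^{T}S_{j}Q_{j}=\lambda_j Q_{j}^{T}Q_{j}=\lambda_j I_{n_j}=S_{j}$, whence $\Sigma_{S_j}=O_{n_j}(\mathbb{C})$. Since the classical Sylvester result carries the entire analytic content, no genuine obstacle remains; the only care needed is the bookkeeping that orthogonality of a block-diagonal $Q$ is exactly orthogonality of each diagonal block, which is what makes the direct-sum decomposition of $\Sigma_S$ compatible with the ambient orthogonal group.
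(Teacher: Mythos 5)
Your argument is correct and follows essentially the same route the paper takes: rewrite $Q^{T}SQ=S$ as $SQ=QS$ via orthogonality, conjugate to Jordan form, invoke Theorem~\ref{posls}(\ref{posls1}) to force $X=PQP^{-1}$ (hence $Q$, since $P$ is block-diagonal conformally with $\bigoplus_j S_j$) to be block-diagonal by eigenvalue, and check the scalar case directly. The paper simply asserts that the result ``follows immediately'' from the Sylvester theorem; your write-up spells out the same steps in slightly more detail.
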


\vspace{-1mm}

Therefore the isotropy groups under (\ref{aos}) of matrices with all distinct eigenvalues (hence with nonvanishing discriminants of their characteristic polynomials) are trivial. 
The situation in the generic case (on a complement of a complex analytic subset of codimension $1$) is thus quite simple.


Our aim is to inspect the nongeneric matrices (especially nondiagona\-li\-zable). The principal object of the investigation will be (up to similarity) the group of all nonsingular matrices commuting with a given square matrix $M$, i.e. nonsingular solutions of the homogeneous linear Sylvester's equation $MX=XM$;
see Sec. \ref{notation} for its properties.
First, recall that a \emph{block upper-triangular Toeplitz} matrix is: 
%
\small
\begin{align*}
&T(A_0,A_1,\ldots,A_{\beta-1})=\begin{bmatrix}
  A_{0} & A_{1}         & A_{2}             & \ldots   & \ldots &    A_{\beta-1}  \\
0       & A_0 & A_{1} & A_2  &      & \vdots \\
 \vdots & \ddots            & A_0               & A_1  & \ddots &   \vdots \\ 
 \vdots &  & \ddots   & \ddots             & \ddots &  A_2\\
 \vdots &        &              & \ddots  & \ddots &   A_1 \\
0       & \ldots            & \ldots & \ldots &  0      & A_0
\end{bmatrix}\qquad (\beta\textrm{-by-}\beta), 
\end{align*}
\normalsize
where $A_0,A_1,\ldots,A_{\beta-1}\in \mathbb{C}^{m\times n}$ and $T(A_0,A_1,\ldots,A_{\beta-1})=[T_{jk}]_{j,k=1}^{\beta}$ with $T_{jk}=0$ for $j>k$ and $T_{jk}=T_{(j+1)(k+1)}$.
Next, suppose $\alpha_1>\alpha_2>\cdots>\alpha_N$ and $m_1,\ldots,m_N\in \mathbb{N}$. Let $\mathcal{X}$ be an $N\times N$ block matrix such that its block $\mathcal{X}_{rs}$ 
is a rectangle $\alpha_r\times \alpha_s$ block upper-triangular Toeplitz matrix with blocks od size $m_r\times m_s$:
\begin{equation}\label{0T0}
\mathcal{X}=[\mathcal{X}_{rs}]_{r,s=1}^{N},\qquad
\mathcal{X}_{rs}=
\left\{
\begin{array}{ll}
[0\quad \mathcal{T}_{rs}], & \alpha_r<\alpha_s\\
\begin{bmatrix}
\mathcal{T}_{rs}\\
0
\end{bmatrix}, & \alpha_r>\alpha_s\\
\mathcal{T}_{rs},& \alpha_r=\alpha_s
\end{array}\right., \qquad b_{rs}=\min\{\alpha_s,\alpha_r\},
\end{equation}
in which 
$\mathcal{T}_{rs}$ is a $b_{rs}\times b_{rs}$ block upper-triangular Toeplitz matrix.
It turns out that orthogonal solutions of the equation $SX=XS$ with $S$ of the form (\ref{NF1s}) are related to matrices of the form (\ref{0T0}) such that the following properties are satisfied:
\begin{enumerate}[label={\bf (\Roman*)},ref={\Roman*},wide=0pt,itemsep=10pt]
\item \label{stabs0} The nonzero entries of $\mathcal{X}_{rs}$ for $r>s$ can be taken as free variables.

\item \label{stabs1} If $s=r$, then $\mathcal{X}_{rr}=T(A_0^{r},\ldots,A_{\alpha_r-1}^{r})$,
where $A_0^{r}\in O_{m_r}(\mathbb{C})$ can be any orthogonal matrix, and for $\alpha_r\geq 2$, $j\in \{1,\ldots,\alpha_r-1\}$ we have $A_{j}^{r}=A_0^{r}Z_j^{r}+D_j^{r}$ for some freely chosen skew-symmetric $Z_j^{r}=-(Z_j^{r})^{T}$ of size $m_r\times m_r$, 
and with $D_j^{r}\in \mathbb{C}^{m_r\times m_r}$ depending uniquely (and polynomially) on the entries of $A_0^{p}$, $Z_{\widetilde{j}}^{p}$ with $\widetilde{j}\in \{1,\ldots,j-1\}$, $p\in \{1,\ldots,N\}$ and on the entries of $\mathcal{X}_{pt}$ for $p,t\in \{1,\ldots,N\}$ with $p>t$. 
\item \label{stabs2} The entries of $\mathcal{X}_{rs}$ for $r<s$ are uniquely determined (the dependence is polynomial) by the entries of $\mathcal{X}_{pt}$ for $p,t\in \{1,\ldots,N\}$ with $p\geq t$.
\end{enumerate}

A simple example of a block diagonal matrix of the form (\ref{0T0}) is
%
\begin{align}\label{asZ}
&\mathcal{W}=\bigoplus_{r=1}^{N}T(I_{m_r},W_1^{r},\ldots,W_{\alpha_r-1}^{r}),\\
%
%
&W_{1}^{r}:=\frac{1}{2}Z_{1}^{r}, \qquad 
W_{n+1}^{r}:=\frac{1}{2}\big(Z_{n+1}^{r}-\sum_{j=1}^{n}(W_j^{r})^{T}W_{n-j+1}^{r}
\big), \quad n\geq 1,\nonumber
\end{align}
in which all $Z_{n}^{r}$ are skew-symmetric.
%
Another special matrix of the form (\ref{0T0}) contains the identity matrix as principal submatrix, formed by all blocks except those at the $p$-th and the $t$-th columns and rows, while 
blocks in the $p$-th and the $t$-th columns and rows are as follows: 
\vspace{-1mm}
\begin{align}\label{Hptk}
&\mathcal{G}_{p,t}^{k}(F)=[(\mathcal{G}_{p,t}^{k}(F))_{rs}]_{r,s=1}^{N},\qquad
(\mathcal{G}_{p,t}^{k}(F))_{rs}=
\left\{
\begin{array}{ll}
\begin{bsmallmatrix}
0\quad \mathcal{U}_{rs}
\end{bsmallmatrix}, & \alpha_r<\alpha_s\\
\begin{bsmallmatrix}
\mathcal{U}_{rs}\\
0
\end{bsmallmatrix}, & \alpha_r>\alpha_s\\
\mathcal{U}_{rs},
& \alpha_r=\alpha_s
\end{array}\right., \quad p< t,
\end{align}
where
\begin{align*}
&\mathcal{U}_{rs}=\left\{
\begin{array}{ll}
\oplus_{j=1}^{\alpha_r}I_{m_r}, &  r=s,\\
0,                        &  r\neq s 
\end{array}
\right., \quad \{r,s\}\not\subset\{p,t\},\qquad
\mathcal{U}_{rr}=T(I_{m_r},A_1^{rr},\ldots,A_{\alpha_r-1}^{rr}), \quad r\in \{p,t\},\nonumber\\ 
&
A_{j}^{pp}=\left\{\begin{array}{ll}
a_{n-1}(F^{T}F)^{n}, & j=n(2k+\alpha-\beta)\\
0,                      & \textrm{otherwise}
\end{array}
\right.,\qquad 
a_{n}=-\frac{1}{2^{2n+1}}\frac{1}{n+1}{2n \choose n},
\nonumber \\
&
A_{j}^{tt}=\left\{\begin{array}{ll}
a_{n-1}(FF^{T})^{n}, & j=n(2k+\alpha-\beta)\\
0,                      & \textrm{otherwise}
\end{array}
\right., \nonumber \\
&
\mathcal{U}_{pt}=
N_{\alpha_t}^{k}(F ),
\qquad
\mathcal{U}_{tp}=
N_{\alpha_t}^{k}(-F^{T}),
\qquad 0\leq k \leq \alpha_t-1,
\nonumber
\end{align*}
in which $N_{\beta}^{k}(F)$ is a $\beta\times \beta$ block matrix with $F\in \mathbb{C}^{m_p\times m_t}$ on the $k$-th diagonal above the main diagonal for $k\geq 1$ (on the main diagonal for $k=0$) and zeros othervise.

\begin{example}
$N=3$, $\alpha_1=4$, $\alpha_2=2$, $\alpha_3=1$, $m_1=2$, $m_2=3$, $m_3=1$; $F\in \mathbb{C}^{2\times 3}$:\\
\vspace{-1mm}
\small
\[
\mathcal{G}_{1,2}^{0}(F)=\begin{bmatrix}[cccc|cc|c]
I_{2} & 0 & -\tfrac{1}{2}F^{T}F & 0  &  -F^{T}  &  0  &  0\\
0   & I_{2} & 0   & -\tfrac{1}{2}F^{T}F  &  0  &   -F^{T}  &  0\\
0   & 0   & I_{2} & 0    &  0 & 0 & 0 \\
0   & 0   &  0  &  I_{2}   & 0 &0 & 0\\
\hline
0   & 0   & F & 0                         &  I_3  & 0  &  0\\
0   & 0   & 0   & F              &  0    &   I_3  &  0\\
\hline
0   & 0   & 0   & 0                        &  0    &   0    & 1 
\end{bmatrix}
\]
\normalsize
\end{example}

Our main result is the following.

\begin{theorem}\label{stabs}
%
%
If $S=\bigoplus_{r=1}^{N}\left(\bigoplus_{j=1}^{m_r} K_{\alpha_r}(\lambda)\right)$ for $\lambda\in \mathbb{C}$, 
then its isotropy group $\Sigma_{S}$ (with respect to (\ref{aos})) is isomorphic to the subgroup of the group of all invertible matrices of the form (\ref{0T0}) and such that its elements satisfy 
properties (\ref{stabs1}), (\ref{stabs1}), (\ref{stabs2}). 

Furthermore, $\Sigma_{\mathcal{S}}$ is isomorphic to a
semidirect product $\mathbb{O}\ltimes \mathbb{V}$, in which the subgroup $\mathbb{O}$ 
consists of all matrices of the form $\mathcal{Q}=\oplus_{r=1}^{N}(\oplus_{j=1}^{\alpha_r} Q_r)$ 
with $Q_r\in O_{m_j}(\mathbb{C})$, and a unipotent normal subgroup $\mathbb{V}$ (of order at most $\alpha_1-1$ and nilpotency class at most $\alpha_1$) generated by the set of matrices of the form (\ref{asZ}) and (\ref{Hptk}).
\end{theorem}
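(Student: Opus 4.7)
The plan is to first reduce the defining equation $Q^{T}SQ=S$ with $Q\in O_n(\mathbb{C})$ to the pair of conditions $SQ=QS$ and $Q^{T}Q=I$ (this is immediate since the first together with $Q^T Q=I$ yields commutation, and conversely). Classical Sylvester theory, applied to $S$ whose Jordan structure consists only of blocks at a single eigenvalue $\lambda$ with sizes $\alpha_1>\cdots>\alpha_N$ and multiplicities $m_r$, shows that every solution of $SX=XS$ has the block-rectangular upper-triangular Toeplitz shape (\ref{0T0}). So the task becomes: impose the orthogonality condition $X^{T}X=I$ on matrices of that shape, and determine the resulting free parameters.

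Expanding $X^{T}X=I$ block by block gives, for each pair of indices $(r,s)$, an equation on each Toeplitz coefficient of the form $\sum_{p,j} (A_{\bullet}^{p;r})^{T}A_{\bullet}^{p;s} = \delta_{rs}I$ (with appropriate shifted indexing coming from the rectangular Toeplitz padding in (\ref{0T0})). I would proceed by induction on the Toeplitz level $j$. At $j=0$ and $r=s$ the identity collapses to $(A_0^{r})^{T}A_0^{r}=I_{m_r}$, so $A_0^{r}\in O_{m_r}(\mathbb{C})$; at higher levels the $(r,r)$-identity reads $(A_0^{r})^{T}A_j^{r}+(A_j^{r})^{T}A_0^{r}=\Lambda_j^r$ for a matrix $\Lambda_j^r$ polynomial in previously-fixed data, which determines $(A_0^r)^{T}A_j^{r}$ only up to a skew-symmetric summand $Z_j^{r}$; writing $A_j^{r}=A_0^{r}Z_j^{r}+D_j^{r}$ yields exactly property (\ref{stabs1}). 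For $r\neq s$ I would take the below-diagonal blocks ($\alpha_r<\alpha_s$ or the lower-triangular pattern inside a shared size) as free variables, establishing (\ref{stabs0}); the corresponding $(s,r)$-equations then become triangular linear systems in the above-diagonal blocks and can be inverted recursively, the leading coefficient being $A_0^{s}\otimes A_0^{r}$ which is invertible. This produces (\ref{stabs2}) and completes the description of $\Sigma_S$.

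For the semidirect product decomposition, the map sending each $X\in \Sigma_S$ to the tuple of its leading blocks $(A_0^{1},\ldots,A_0^{N})$ is a group homomorphism onto $\mathbb{O}\cong \prod_{r} O_{m_r}(\mathbb{C})$, split by the obvious inclusion $\mathcal{Q}=\oplus_r(\oplus_j Q_r)\hookrightarrow \Sigma_S$; its kernel $\mathbb{V}$ consists of elements with $A_0^{r}=I_{m_r}$ for all $r$, which are unipotent since their strictly block-upper-triangular part is nilpotent of index at most $\alpha_1$. Two families of elements of $\mathbb{V}$ must be shown to generate it: the purely diagonal unipotents (\ref{asZ}), obtained by setting all off-diagonal $\mathcal{X}_{rs}$ to zero and choosing arbitrary skew-symmetric $Z_n^{r}$ (the recursion defining $W_n^r$ is exactly the one forced by property (\ref{stabs1}) when $A_0^r=I$), and the pair generators $\mathcal{G}_{p,t}^{k}(F)$ of (\ref{Hptk}) which activate a single below-diagonal parameter $F$ and solve for everything above forced by (\ref{stabs2}).

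The main obstacle is verifying this last assertion, namely that (\ref{asZ}) and (\ref{Hptk}) genuinely generate $\mathbb{V}$, and in particular that the coefficients in (\ref{Hptk}) solve the orthogonality constraints; the appearance of the Catalan numbers $a_n=-\tfrac{1}{2^{2n+1}(n+1)}\binom{2n}{n}$ strongly suggests that the recursion (\ref{stabs1}) for $D_j^r$, along a single pair $(p,t)$, is the Taylor expansion of a square-root-type algebraic function ($\tfrac{1}{2}(1-\sqrt{1-x})=-\sum_n a_n x^{n+1}$). The strategy here is to show that after choosing any element of $\mathbb{V}$ one can successively left-multiply by $\mathcal{G}_{p,t}^{k}(F)^{-1}$ to kill its below-diagonal blocks in a fixed order, reducing to a product of $\mathcal{W}$-type matrices; the bounds on order and nilpotency class then follow from $\alpha_1$ being the largest Jordan block. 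This algorithmic reduction is also what makes the statement constructive in the sense advertised by the abstract.
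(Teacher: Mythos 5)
Your overall strategy follows the paper's: rewrite $Q^{T}SQ=S$ with $Q\in O_n(\mathbb{C})$ as $SQ=QS$ plus $Q^{T}Q=I$, conjugate $S$ to its Jordan form $J$ so that the commutant has the rectangular block Toeplitz shape (\ref{0T0}), and then impose orthogonality level by level, extracting the $\mathbb{O}\ltimes\mathbb{V}$ decomposition afterwards. But there is a genuine gap in the reduction of the orthogonality condition. Writing $X=PQP^{-1}$ with $S=P^{-1}JP$ and $P=\oplus_{r}\oplus_{j}P_{\alpha_j}$, $P_{\alpha}=\tfrac{1}{\sqrt{2}}(I_{\alpha}+iE_{\alpha})$, the matrix $P$ is symmetric but \emph{not} orthogonal: $P_{\alpha}^{2}=iE_{\alpha}$, hence $P^{T}P=P^{2}=iE$ with $E$ the direct sum of backward identities. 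Consequently $Q^{T}Q=I$ transforms to $EX^{T}EX=I$ (equivalently, after the $\Omega$-permutation, to $\mathcal{F}\mathcal{X}^{T}\mathcal{F}\mathcal{X}=\mathcal{I}$, the $\mathcal{B}=\mathcal{C}=\mathcal{I}$ case of Lemma~\ref{EqT}) and \emph{not} to $X^{T}X=I$. You have silently replaced the correct constraint by $X^{T}X=I$.

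This is not cosmetic. The backward identity flips the transpose of an upper-triangular Toeplitz block back into an upper-triangular Toeplitz block with transposed coefficients (identity (\ref{ETE})); this flip is exactly what makes the constraints decouple. In $EX^{T}EX=I$ the first row of $\widetilde{\mathcal{X}}_{rk}=(E X^{T}E)_{rk}$ has leading zeros for $k<r$ while the first column of $\mathcal{X}_{kr}$ has trailing zeros, so at level $j=0$ the $(r,r)$ diagonal constraint reduces to $(A_0^{rr})^{T}A_0^{rr}=I_{m_r}$. In $X^{T}X=I$ the $(1,1)$ entry of $(X^{T}X)_{rr}$ is instead $\sum_{k\le r}(A_0^{kr})^{T}A_0^{kr}$: every below-diagonal level-$0$ block $A_0^{kr}$ contributes. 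Your claim that the $(r,r)$, $j=0$ equation collapses to $(A_0^{r})^{T}A_0^{r}=I_{m_r}$ is therefore false for the equation you wrote, and the induction on the Toeplitz level does not start correctly. The Kronecker-product coefficient $A_0^{s}\otimes A_0^{r}$ you invoke for the above-diagonal blocks also does not match; the paper's algorithm inverts a single left factor $(A_0^{rr})^{T}B_0^{r}$ in equation (\ref{eqABD}). Once the constraint is corrected to $\mathcal{F}\mathcal{X}^{T}\mathcal{F}\mathcal{X}=\mathcal{I}$, the rest of your outline (the splitting on leading Toeplitz blocks, the generation of $\mathbb{V}$ by (\ref{asZ}) and (\ref{Hptk}), the Catalan-number recursion from the square-root generating function) matches Lemma~\ref{lemauni} and Lemma~\ref{lemanilpo}.
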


\vspace{-1mm}

We refer to \cite{Milne} for 
the theory of nilpotent and unipotent algebraic groups.  

\begin{remark}\label{OpI}
\begin{enumerate}[label=(\arabic*),ref={P\Roman*},wide=0pt,itemsep=2pt]
\item An algorithm to compute the isotropy groups is provided as (an essential) part of the proof of Theorem \ref{stabs}, more precisely, by Lemma \ref{EqT}. Due to technical reasons the lemma is stated and proved in Sec. \ref{cereq}. It describes the solutions of a certain rectangular block upper-triangular Toeplitz matrix equation, hence it
might be also of independent interest in matrix analysis. 
\item
To some extend Theorem \ref{stabs} could be applied to the problem of simultaneous reduction under $T$-congruence of a pair $(A,B)$ with $A$ arbitrary and $B$ nonsingular symmetric. We first make $B$ into the identity $I$ by applying Autonne-Takagi factorization and reduce $(A,B)$ to $(A',I)$. Next, we write $A'=C+Z$ with $S$ symmetric and $Z$ skew-symmetric. By a suitable orthogonal similarity transformation (keeping $I$ intact) we put $C$ into the symmetric normal form $S(C)$; we obtain $(S(C)+Z',I)$ with $Z'$ skew-symmetric. Finally, $Z'$ is simplified by
using the isotropy group of $S(C)$ with respect to (\ref{aos}) (keeping $I$, $S(C)$ intact). 
\end{enumerate}
\end{remark}

\vspace{-1mm}

The orbit $\Orb (A)$ of a matrix $A\in S_n(\mathbb{C})$ is an immersed complex submanifold in $S_n(\mathbb{C})$ and let $\codim (\Orb (A))$ be its codimension. Moreover, $\Orb (A)$ is biholomorphic to the quotient $O_n(\mathbb{C})/\Sigma_A$ (check e.g. \cite[Ch. II.1]{GOV}. Thus the following corollary is an immediate consequence of Theorem \ref{stabs}, although it can be easily proved by computing the tangent bundle of an orbit (see Sec. \ref{PfTh}).

\vspace{-1mm}

\begin{posledica}\label{orbdim}
%
If $\lambda_1,\ldots,\lambda_k$ are distinct eigenvalues of  
$S=
\bigoplus_{j=1}^{k}S_j$,
where each $\mathcal{S}_j$ is a direct sum whoose summands are of the form (\ref{Sm}) and correspond to the eigenvalue $\lambda_j$,
then $\codim (\Orb(S))=\sum_{j=1}^{k}\codim (\Orb(S_j))$.
Moreover, if $S=\bigoplus_{r=1}^{N}\left(\bigoplus_{j=1}^{m_r} S_{\alpha_r}(\lambda)\right)$ for $\lambda\in \mathbb{C}$, 
it then follows that
$\codim (\Orb(S))=\sum_{r=1}^{N}\alpha_r m_r\big(\tfrac{1}{2}(m_r+1)+\sum_{s=1}^{r-1}m_s\big)$.
\end{posledica}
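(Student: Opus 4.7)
The plan is to combine Proposition~\ref{stabs11} and Theorem~\ref{stabs} with the standard dimension count coming from the biholomorphism $\Orb(A)\cong O_n(\mathbb{C})/\Sigma_A$. Since $\dim O_n(\mathbb{C})=\tfrac{n(n-1)}{2}$ and $\dim S_n(\mathbb{C})=\tfrac{n(n+1)}{2}$, for any $A\in S_n(\mathbb{C})$ of size $n$ I obtain the master identity
\begin{equation*}
\codim(\Orb(A))=\dim S_n(\mathbb{C})-\dim O_n(\mathbb{C})+\dim \Sigma_A=n+\dim \Sigma_A,
\end{equation*}
to which both claims will be reduced.

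For the first assertion, Proposition~\ref{stabs11} gives $\Sigma_S=\bigoplus_{j=1}^{k}\Sigma_{S_j}$, hence $\dim \Sigma_S=\sum_{j=1}^{k}\dim \Sigma_{S_j}$. Writing $n_j$ for the size of $S_j$ (so that $n=\sum_j n_j$) and applying the master identity separately to each $S_j$ yields $\codim(\Orb(S))=\sum_{j=1}^{k}\codim(\Orb(S_j))$ immediately.

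For the second assertion I would invoke Theorem~\ref{stabs}: $\Sigma_S\cong \mathbb{O}\ltimes \mathbb{V}$ and therefore $\dim \Sigma_S=\dim \mathbb{O}+\dim \mathbb{V}$. The factor $\mathbb{O}\cong\prod_{r=1}^{N} O_{m_r}(\mathbb{C})$ contributes $\dim \mathbb{O}=\sum_{r=1}^{N}\tfrac{m_r(m_r-1)}{2}$. For $\dim \mathbb{V}$ I would count the free parameters delineated by properties \ref{stabs0} and \ref{stabs1}: on the diagonal blocks, \ref{stabs1} declares the skew-symmetric matrices $Z_j^{r}$, $j=1,\dots,\alpha_r-1$, to be free, giving $(\alpha_r-1)\tfrac{m_r(m_r-1)}{2}$ parameters per $r$; on the strictly lower off-diagonal blocks (where $r>s$ and hence $\alpha_r<\alpha_s$ since the $\alpha_r$'s are strictly decreasing), \ref{stabs0} makes all entries of $\mathcal{T}_{rs}$ free, contributing $b_{rs}m_r m_s=\alpha_r m_r m_s$ per pair $(r,s)$; and \ref{stabs2} makes the strictly upper blocks wholly determined. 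Hence
\begin{equation*}
\dim \mathbb{V}=\sum_{r=1}^{N}(\alpha_r-1)\tfrac{m_r(m_r-1)}{2}+\sum_{1\leq s<r\leq N}\alpha_r m_r m_s.
\end{equation*}

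To finish, I would assemble $\codim(\Orb(S))=n+\dim \mathbb{O}+\dim \mathbb{V}$ with $n=\sum_r \alpha_r m_r$, factor $\alpha_r m_r$ out of each summand, and simplify $1+\tfrac{m_r-1}{2}=\tfrac{m_r+1}{2}$ to arrive at the asserted formula. The one place where bookkeeping care is needed is property \ref{stabs1}: the orthogonal factor $A_0^{r}$ already lives in $\mathbb{O}$, while the correction terms $D_j^{r}$ (along with the upper blocks handled by \ref{stabs2}) depend polynomially on the other data and contribute no new degrees of freedom, so one must avoid double-counting $A_0^{r}$ and avoid incorrectly counting $D_j^{r}$. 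A parallel route, alluded to in the sentence preceding the corollary, would instead compute $\dim T_S$ directly by applying rank-nullity to $X\mapsto X^TS+SX$ on the space of skew-symmetric matrices, but this reduces to the same count of skew-symmetric solutions of $SX=XS$, so the route via Theorem~\ref{stabs} is the most economical.
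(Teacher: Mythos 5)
Your proposal is correct, and the arithmetic checks out against the stated formula: with $\dim\mathbb{O}=\sum_r\tfrac{m_r(m_r-1)}{2}$ and $\dim\mathbb{V}=\sum_r(\alpha_r-1)\tfrac{m_r(m_r-1)}{2}+\sum_{s<r}\alpha_r m_r m_s$ you get $\dim\Sigma_S=\sum_r\alpha_r m_r\big(\tfrac{m_r-1}{2}+\sum_{s<r}m_s\big)$, which is precisely the dimension count in Lemma~\ref{EqT}, and adding $n=\sum_r\alpha_r m_r$ gives the claimed $\sum_r\alpha_r m_r\big(\tfrac{m_r+1}{2}+\sum_{s<r}m_s\big)$.

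The route you take -- the master identity $\codim(\Orb(A))=\dim S_n(\mathbb{C})-\dim O_n(\mathbb{C})+\dim\Sigma_A=n+\dim\Sigma_A$ combined with Proposition~\ref{stabs11} for the first claim and the $\mathbb{O}\ltimes\mathbb{V}$ decomposition of Theorem~\ref{stabs} for the second -- is the one the paper flags as ``an immediate consequence of Theorem~\ref{stabs}'' but does not write out. The proof the paper actually gives in Section~\ref{PfTh} is the other route you mention in your last paragraph: it computes $T_A$ at the Lie-algebra level, reduces to counting skew-symmetric solutions of $SX=XS$, and passes to Jordan form $JY=YJ$ with the extra constraint $Y=-EY^{T}E$ coming from $P^2=iE$; this kills the diagonal Toeplitz blocks ($Y_{jj}=-Y_{jj}$, hence $Y_{jj}=0$) and pairs the off-diagonal blocks via $T_{jk}=-T_{kj}$, after which the count is immediate. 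Your version is shorter once Theorem~\ref{stabs} and Lemma~\ref{EqT} are in hand, since you simply read off the free parameters; the paper's version is a lighter, self-contained tangent-space computation that does not require the full algorithmic machinery of Lemma~\ref{EqT}, which is presumably why the author chose to present it. Your bookkeeping caveat is correctly handled: $A_0^r$ is counted only through $\mathbb{O}$, the $Z_j^r$ with $j\geq 1$ only through $\mathbb{V}$, and the $D_j^r$ and the strictly upper blocks add nothing, exactly as properties (\ref{stabs1}) and (\ref{stabs2}) require.
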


\vspace{-1mm}

Note that the dimension of an orbit of $A$ in $\mathbb{C}^{n\times n}$ with respect to similarity is simply equal to the codimension of the set of solutions 
$AX=XA$ (see e.g. \cite[Section 30]{Arnold}), while 
in case of $T$-congruence 
one must solve $XA+AX^T=0$ (see \cite{TeranDopi2}).

\vspace{-1mm}


%
%
%
%
\section{Preliminaries}\label{notation}

\vspace{-1mm}

In this section we prepare some preliminary material.
%
First we recall a classical result on solutions of the Sylvester's equation; 
see e.g \cite[Chap. VIII]{Gant}.

\begin{theorem}\label{posls}
Let $J$ be of the form (\ref{JFs}). Suppose a matrix equation
\vspace{-1mm}
\begin{equation}\label{eqH1}
JX=XJ.
\end{equation}
%
\begin{enumerate}
\item \label{posls1} 
Assume that $J_{}^{}=\bigoplus_{r=1}^{N}J_{r}$,
in which all blocks of $J$ corresponding to the eigenvalue $\rho_r$ are collected together into $J^{}_{r}$. Then $X$ is a solution of the equation (\ref{eqH1}) if and only if it is of the form $X=\oplus_{r=1}^{N}X_{r}$ with $J^{}_{r}X_{r}=X_{r}J_{r}$. 
\item \label{posls2} Let $J_{}^{}=\bigoplus_{r=1}^{N}\bigoplus_{j=1}^{m_r} J_{\alpha_j}(\lambda)$ for $\lambda \in \mathbb{C}$ and $\alpha_{1}>\alpha_{2}>\ldots >\alpha_{N}$, and let $X$ be partitioned conformally to blocks as $J_{}$. Then $X$ is a solution of (\ref{eqH1}) if and only if  
$X=[X_{rs}]_{r,s=1}^{N}$ is such that every block $X_{rs}$ is further a $m_r\times m_s$ block matrix with blocks of size $\alpha_r\times \alpha_s$ and of the form 
\vspace{-1mm}
\small
\begin{equation}\label{QT}
\left\{\begin{array}{ll}
[0\quad T], & \alpha_r<\alpha_s \\
\begin{bmatrix}
T\\
0
\end{bmatrix}, & \alpha_r>\alpha_s\\
T, & \alpha_r=\alpha_s
\end{array}
\right.,
\end{equation}
\normalsize
\vspace{-1mm}
in which $T$ is an $b_{rs}$-by-$b_{rs}$ upper-triangular Toeplitz matrix ($b_{rs}=\min\{\alpha_r,\alpha_s\}$). 
%
\end{enumerate}
%
\end{theorem}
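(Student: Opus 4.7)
The plan is to translate orthogonality into a bilinear form preservation on block Toeplitz matrices and then extract the semidirect structure from the resulting parametrization. Since $S$ is symmetric, $Q \in \Sigma_S$ is equivalent to $SQ = QS$ together with $Q^T Q = I$. Fix an intertwiner $P$ with $P^{-1} S P = J = \bigoplus_r \bigoplus_{j=1}^{m_r} J_{\alpha_r}(\lambda)$ and set $X := P^{-1} Q P$; the assignment $Q \mapsto X$ is a group isomorphism from $\Sigma_S$ onto its image. By Theorem \ref{posls}(\ref{posls2}), $XJ = JX$ forces $X$ into the block shape (\ref{0T0}) with Toeplitz blocks $\mathcal{T}_{rs}$. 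Meanwhile $Q^T Q = I$ becomes $X^T G X = G$, where $G := P^T P$; since $S$ is symmetric, $G$ intertwines $J$ and $J^T$, and an appropriate choice of $P$ tailored to (\ref{Sm}) makes $G$ a direct sum of symmetric Hankel-type blocks compatible with the Toeplitz shape of $X$. Substituting the Toeplitz parametrization into $X^T G X = G$ then yields a system of rectangular block upper-triangular Toeplitz matrix equations.

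This system is exactly what Lemma \ref{EqT} (Sec.\ \ref{cereq}) solves, and its output translates directly to (\ref{stabs0})--(\ref{stabs2}): the strictly lower blocks $\mathcal{X}_{rs}$ with $r > s$ are free; each strictly upper block $\mathcal{X}_{rs}$ with $r < s$ is uniquely determined polynomially by the lower ones via the reflection imposed by $G$; and each diagonal block $\mathcal{X}_{rr} = T(A_0^r, A_1^r, \ldots, A_{\alpha_r - 1}^r)$ satisfies $A_0^r \in O_{m_r}(\mathbb{C})$ with higher coefficients $A_j^r = A_0^r Z_j^r + D_j^r$ for a freely chosen skew-symmetric $Z_j^r$ and a polynomial $D_j^r$ in previously-determined data. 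Conversely, any choice of free data produces such an $X$ and hence a $Q \in \Sigma_S$, proving the first assertion.

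For the semidirect product, define $\pi \colon \Sigma_S \to \mathbb{O}$ by $X \mapsto \bigoplus_r \bigoplus_{j=1}^{\alpha_r} A_0^r$, where $A_0^r$ is the constant Toeplitz coefficient of $\mathcal{X}_{rr}$. A block multiplication check shows that constant terms of a product of matrices of the form (\ref{0T0}) multiply as in $\bigoplus_r O_{m_r}(\mathbb{C})$, so $\pi$ is a surjective group homomorphism; it is split by the evident embedding $\mathbb{O} \hookrightarrow \Sigma_S$ (set all $Z_j^r = 0$ and all strictly lower $\mathcal{X}_{rs} = 0$, then invoke (\ref{stabs2})). The kernel $\mathbb{V} := \ker \pi$ consists of matrices with each diagonal constant term equal to $I_{m_r}$; these are unipotent with respect to the natural filtration by Toeplitz level, and the bounds on derived length and nilpotency class follow because multiplication of such matrices shifts nonzero coefficients further from the constant term, with at most $\alpha_1$ nontrivial levels available. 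Finally, one checks that $\mathcal{W}$ as in (\ref{asZ}) realizes an arbitrary skew-symmetric diagonal parameter (the recursion for $W_n^r$ is precisely what forces $\mathcal{W}^T G \mathcal{W} = G$ on a diagonal summand), while $\mathcal{G}_{p,t}^k(F)$ as in (\ref{Hptk}) realizes an arbitrary single off-diagonal coefficient $F$ at position $(p,t)$, level $k$, with the mandatory diagonal corrections $a_{n-1}(F^T F)^n$ forced by (\ref{stabs1}). Clearing the strictly lower blocks of an arbitrary $X \in \mathbb{V}$ level by level using these $\mathcal{G}$'s, and finishing with a single $\mathcal{W}$, decomposes $X$ into a product of generators.

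The main obstacle is Lemma \ref{EqT}: producing the explicit recursive polynomial formulas for the $D_j^r$ and for the upper blocks, and verifying their uniqueness, requires a delicate induction on the Toeplitz level combined with a choice of intertwiner $P$ that keeps $G$ tractable. A subsidiary technicality is the generation statement for $\mathbb{V}$: one must verify that the iterative decomposition of $X$ into factors of the form (\ref{asZ}) and (\ref{Hptk}) terminates after a controlled number of steps and recovers exactly the free parameters promised by (\ref{stabs0})--(\ref{stabs2}), so that the bounds on nilpotency class and derived length are saturated only when forced by $\alpha_1$.
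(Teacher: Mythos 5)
Your proposal does not prove the statement you were asked to prove. The statement is Theorem \ref{posls}, the classical description of the solutions of $JX=XJ$ for a Jordan matrix $J$: part (\ref{posls1}) says that blocks of $X$ coupling Jordan blocks with distinct eigenvalues vanish, and part (\ref{posls2}) says that for a single eigenvalue the blocks $X_{rs}$ are built from rectangular upper-triangular Toeplitz matrices as in (\ref{QT}). The paper does not reprove this; it recalls it from Gantmacher and then builds on it. Your text instead sketches a proof of Theorem \ref{stabs} (the isotropy-group theorem), and in doing so it explicitly invokes ``Theorem \ref{posls}(\ref{posls2})'' as a known fact --- so the object you were supposed to establish is used as a black box while the argument you actually supply addresses a different result. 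Relative to the assigned statement this is not a partial proof with a gap; it is no proof at all.

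What a proof of Theorem \ref{posls} requires is elementary and quite different from what you wrote. For (\ref{posls1}), partition $X$ conformally with $J=\oplus_r J_r$ and note that the $(r,s)$ block satisfies the Sylvester equation $J_rX_{rs}=X_{rs}J_s$; when $\rho_r\neq\rho_s$ the spectra of $J_r$ and $J_s$ are disjoint, so (for instance) applying the characteristic polynomial $p$ of $J_s$ gives $p(J_r)X_{rs}=X_{rs}p(J_s)=0$ with $p(J_r)$ invertible, forcing $X_{rs}=0$; the diagonal blocks then satisfy $J_rX_r=X_rJ_r$, and the converse is immediate. For (\ref{posls2}), subtract $\lambda I$ to reduce to nilpotent shift blocks $J_\alpha(0)$, and solve $J_{\alpha_r}(0)T=TJ_{\alpha_s}(0)$ entrywise: the shift relations force the entries of $T$ to be constant along diagonals and to vanish below the main diagonal and outside the band permitted by $\min\{\alpha_r,\alpha_s\}$, which is exactly the padded upper-triangular Toeplitz shape (\ref{QT}); conversely every such matrix commutes appropriately, which one verifies by direct multiplication. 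None of the machinery you invoke (the form (\ref{0T0}), Lemma \ref{EqT}, the generators (\ref{asZ}) and (\ref{Hptk}), the semidirect product $\mathbb{O}\ltimes\mathbb{V}$) is relevant here; indeed that machinery logically depends on Theorem \ref{posls}, so using it would make the argument circular.
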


For our developments it is convenient to work with matrices with smaller number of blocks. This can be achieved by conjugating with a suitable permutation matrix (see e.g. \cite[Sec. 3.1]{Lin}).
Let $e_1,e_2,\ldots,e_{\alpha m}$ be the standard orthonormal basis in $\mathbb{C}^{\alpha m}$.
We set a permutation matrix formed by these vectors: 
\begin{equation}\label{perS}
\Omega_{\alpha,m}:=\left[e_1\;e_{\alpha+1}\;\ldots\;e_{(m-1)\alpha+1}\;e_2\;e_{\alpha+2}\;\ldots\;e_{(m-1)\alpha+2}\;\ldots\;e_{\alpha}\;e_{2\alpha}\;\ldots\;e_{\alpha m}\right].
\end{equation}
Observe that multiplication with $\Omega_{\alpha,m}$ from the right puts the $1$-st, the $(\alpha+1)$-th, \ldots, the $((m-1)\alpha+1)$-th column together,  further the $2$-nd, the $(\alpha+2)$-th, \ldots, the $((m-1)\alpha+2)$-th column together, and soforth. Similarly, multiplicating with $\Omega_{\alpha,m}^{T}$ from the left collects the $1$-st, the $(\alpha+1)$-th, \ldots, the $((m-1)\alpha+1)$-th row together, further the $2$-nd, the $(\alpha+2)$-th, \ldots, the $((m-1)\alpha+2)$-th row together, and soforth.

Suppose $X=[X_{rs}]_{r,s=1}^{N}$ is as in Theorem \ref{posls} (\ref{posls2}). Next, fix $r,s$ and let $b=\min\{\alpha_r,\alpha_s\}$. 
Denote the block of $X_{rs}$ in the $j$-th row and the $k$-th column by 
\[
(X_{rs})_{jk}=
\left\{
\begin{array}{ll}
[0\quad T_{jk}], & \alpha_{r}<\alpha_{s}\\
\begin{bmatrix}
T_{jk}\\
0
\end{bmatrix}, & \alpha_{r}>\alpha_{s}\\
T_{jk},& \alpha_{r}=\alpha_{s}
\end{array}\right., \qquad
\begin{array}{l}
j\in \{1,\ldots m_r\},\quad k\in \{1,\ldots m_s\},\\
T_{jk}:=T(a_0^{jk},a_1^{jk},\ldots,a_{b-1}^{jk}).
\end{array}
\]
By setting $A_n:=[a_n^{jk}]_{j,k=1}^{m_r,m_s}\in \mathbb{C}^{m_r\times m_r}$ for $n\in \{0,\ldots,b-1\}$ and $\mathcal{T}=T(A_0,\ldots,A_{b-1})$, we obtain 
a rectangular block upper-triangular Toeplitz matrix of size $\alpha_r\times \alpha_s$:
%
\begin{equation*}
\Omega_{\alpha_r,m_r}^{T}X_{rs}\Omega_{\alpha_s,m_s}=
\left\{
\begin{array}{ll}
[0\quad \mathcal{T}], & \alpha_r<\alpha_s\\
\begin{bmatrix}
\mathcal{T}\\
0
\end{bmatrix}, & \alpha_r>\alpha_s\\
\mathcal{T},& \alpha_r=\alpha_s
\end{array}\right..
\end{equation*}
%
%
Thus we get a matrix of the form (\ref{0T0}):
\begin{align}\label{otxo}
\Omega^{T}X\Omega=  &  
[\Omega_{\alpha_r,m_r}^{T}X_{rs}\Omega_{\alpha_s,m_s}]_{r,s=1}^{N}, \qquad (\Omega:=\oplus_{r=1}^{N}\Omega_{\alpha_r,m_r}).
\end{align}

\begin{example}
$N=2$, $\alpha_1=3$, $m_1=2$, $\alpha_2=2$, $m_2=3$:
\[
\Omega_{3,2}^{T}\begin{bmatrix}[cc|cc|cc]
a_1 & b_1 & a_2 & b_2 & a_3 & b_3 \\
0   & a_1 & 0   & a_2 & 0   & a_3\\
0   & 0   & 0   & 0   & 0   &  0 \\
\hline
a_4 & b_4 & a_5 & b_5 & a_6 & b_6 \\
0   & a_4 & 0   & a_5 & 0   & a_6\\
0   & 0   & 0   & 0   & 0   &  0 
\end{bmatrix}\Omega_{2,3}
=
\begin{bmatrix}[ccc|ccc]
a_1 & a_2 & a_3 & b_1 & b_2 & b_3 \\
a_4 & a_5 & a_6   & b_4 & b_5   & b_6\\
\hline
0   & 0   & 0   & a_1   & a_2   &  a_3 \\
0 &  0 &   0 & a_4 & a_5 & a_6 \\
\hline
0   & 0 & 0   & 0 & 0   & 0\\
0   & 0   & 0   & 0   & 0   &  0 
\end{bmatrix}.
\]
\end{example}

Next, we observe that the set of nonsingular matrices of the form (\ref{0T0}) has a special group structure, similar to the group of all nonsingular upper-triangular matrices. 
We use ideas from the proof of a similar (maybe somewhat stronger) result for upper-unitriangular matrices \cite[Proposition 3.31]{CK}, \cite[Example 6.49]{Milne}. 

\begin{lemma}\label{lemanilpo}
Let $\mathbb{T}$ be the set of all nonsingular matrices of the form (\ref{0T0}). Then $\mathbb{T}$ is a subgroup of the group of all nonsingular matrices. Furthermore, $\mathbb{T}=\mathbb{D}\ltimes \mathbb{U}$ is a semidirect product of subgroups, where $\mathbb{D}$ contains all nonsingular block-diagonal matrices and $\mathbb{U}$ is a normal subgroup that consists of matrices whoose diagonal blocks are block upper-triangular Toeplitz matrices with identity as the diagonal block. Further, $\mathbb{U}$ is unipotent of order at most $\alpha_1-1$ and it has has nilpotency class at most $\alpha_1$.
\end{lemma}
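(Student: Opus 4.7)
My strategy would be to identify $\mathbb{T}$ with the unit group of a finite-dimensional associative algebra arising as a matrix centralizer, and then to decompose this group using the natural projection onto its semisimple quotient, in close analogy with the classical treatment of the upper-unitriangular group (cf.\ \cite[Prop.~3.31]{CK}, \cite[Ex.~6.49]{Milne}). I would first observe that, by (\ref{otxo}) and Theorem~\ref{posls}(\ref{posls2}), the map $\mathcal{X}\mapsto\Omega\mathcal{X}\Omega^T$ is an algebra isomorphism between the set $\mathcal{A}$ of all matrices of the form (\ref{0T0}) and the centralizer in $M_n(\mathbb{C})$ (with $n=\sum_r m_r\alpha_r$) of the nilpotent Jordan matrix $J=\bigoplus_{r=1}^N\bigoplus_{j=1}^{m_r} J_{\alpha_r}(0)$. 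Since any centralizer is a unital $\mathbb{C}$-subalgebra of $M_n(\mathbb{C})$, so is $\mathcal{A}$, and its invertible elements --- which coincide with $\mathbb{T}$ --- form a subgroup of $GL_n(\mathbb{C})$ (the inverse of a centralizer element lies in the centralizer).

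To obtain the semidirect decomposition I would introduce the map
\[
\pi\colon \mathbb{T}\to\prod_{r=1}^N GL_{m_r}(\mathbb{C}),\qquad \mathcal{X}\mapsto (A_0^1,\ldots,A_0^N),
\]
extracting the constant Toeplitz coefficient of each diagonal block $\mathcal{X}_{rr}=T(A_0^r,\ldots,A_{\alpha_r-1}^r)$. The key step is to show that $\pi$ is a homomorphism: since the strict ordering $\alpha_1>\cdots>\alpha_N$ forces $\alpha_r\neq\alpha_s$ whenever $r\neq s$, the zero padding in (\ref{0T0}) makes $\mathcal{X}_{rs}\mathcal{Y}_{sr}$ vanish along the main block diagonal, so only the term $s=r$ in $(\mathcal{X}\mathcal{Y})_{rr}=\sum_s \mathcal{X}_{rs}\mathcal{Y}_{sr}$ contributes to the leading Toeplitz coefficient, giving $\pi(\mathcal{X}\mathcal{Y})=\pi(\mathcal{X})\pi(\mathcal{Y})$. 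Then $\mathbb{U}:=\ker\pi$ is normal in $\mathbb{T}$, and the section $(Q_1,\ldots,Q_N)\mapsto \bigoplus_{r=1}^N\bigoplus_{j=1}^{\alpha_r} Q_r$ identifies $\prod_r GL_{m_r}(\mathbb{C})$ with a subgroup $\mathbb{D}\subseteq\mathbb{T}$ having $\mathbb{D}\cap\mathbb{U}=\{I\}$. Every $\mathcal{X}\in\mathbb{T}$ factors as $\mathcal{X}=D\cdot (D^{-1}\mathcal{X})$ with $D\in\mathbb{D}$ the block-diagonal lift of $\pi(\mathcal{X})$ and $D^{-1}\mathcal{X}\in\mathbb{U}$, yielding $\mathbb{T}=\mathbb{D}\ltimes\mathbb{U}$.

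For the unipotency and the class/order bounds, each $U\in\mathbb{U}$ corresponds via $\Omega$-conjugation to a centralizer element of $J$ with identity leading Toeplitz coefficient on every diagonal block, so all its eigenvalues equal $1$ and $\mathbb{U}$ is unipotent. Let $\mathcal{R}:=\ker(\pi^{\mathrm{alg}})\subset\mathcal{A}$ be the ideal analogue of $\ker\pi$; since $\mathcal{A}/\mathcal{R}\cong\prod_r M_{m_r}(\mathbb{C})$ is semisimple, $\mathcal{R}$ is the Jacobson radical of $\mathcal{A}$. I would then use the descending filtration $\mathbb{U}_k:=\{I+Z:Z\in\mathcal{R}^k\}$ by normal subgroups; the standard commutator expansion $(I+Z_1)(I+Z_2)(I+Z_1)^{-1}(I+Z_2)^{-1}=I+[Z_1,Z_2]+(\text{higher order})$ gives $[\mathbb{U}_i,\mathbb{U}_j]\subseteq\mathbb{U}_{i+j}$, so the nilpotency class of $\mathbb{U}$ is bounded by the nilpotency index of $\mathcal{R}$ and the length of the chain $\mathbb{U}\supseteq\mathbb{U}_1\supseteq\cdots\supseteq\{I\}$ controls the order. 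A block-level inspection of how multiplication by $\mathcal{R}$ shifts the Toeplitz coefficients within each surviving diagonal block (and across rectangular off-diagonal blocks) yields the stated bounds $\alpha_1$ and $\alpha_1-1$.

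The main obstacle will be verifying the homomorphism property of $\pi$ (equivalently, that $\mathcal{R}$ is a two-sided ideal with the expected nilpotency behaviour); both reduce to a case-by-case inspection of $\mathcal{X}_{rs}\mathcal{Y}_{sr}$ according to whether $\alpha_r<\alpha_s$ or $\alpha_r>\alpha_s$, but are routine given the explicit padding structure in (\ref{0T0}) and the consequence that $\alpha_r=\alpha_s$ forces $r=s$.
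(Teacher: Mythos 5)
Your approach is correct in outline but takes a genuinely different, more structural route than the paper. The paper argues directly: it introduces the filtration $\mathfrak{N}_k$ by vanishing orders of the Toeplitz coefficients, verifies by hand that $\mathfrak{N}_0\mathfrak{N}_k\subset\mathfrak{N}_{k+1}$ (which gives both nilpotency of the "associated Lie algebra" $\mathfrak{N}_0$ and the central series (\ref{cseq})), and then deduces the group structure, the decomposition $\mathcal{X}=\mathcal{D}\mathcal{U}$, and the normality of $\mathbb{U}$ by computation. You instead recognise $\mathcal{A}=\Omega^{T}(\text{centralizer of }J)\Omega$ as a unital subalgebra of $M_n(\mathbb{C})$, so that $\mathbb{T}=\mathcal{A}^{\times}$ is automatically a group; you then build the semidirect product from the algebra homomorphism $\pi$ onto the semisimple quotient $\prod_r M_{m_r}(\mathbb{C})$ and identify $\mathcal{R}=\ker\pi$ with the Jacobson radical. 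This is cleaner for the group-theoretic statements (closure, normality, the splitting $\mathbb{T}=\mathbb{D}\ltimes\mathbb{U}$), and your verification that $\pi$ is multiplicative is sound: when $r\neq s$ the strict inequality $\alpha_r\neq\alpha_s$ puts zeros in exactly the right places so that the first row of $\mathcal{X}_{rs}$ paired with the first column of $\mathcal{Y}_{sr}$ gives $0$, so only the $s=r$ term contributes to the leading Toeplitz coefficient.

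The one place where your proposal is materially thinner than the paper is exactly the step it defers to "a block-level inspection": the nilpotency of $\mathcal{R}$ (hence $\mathcal{R}^{\alpha_1}=0$), and the resulting bounds on the order and the nilpotency class of $\mathbb{U}$. You cannot simply invoke "$\mathcal{R}$ is the Jacobson radical, so it is nilpotent," because establishing that $\mathcal{R}$ equals the Jacobson radical already presupposes its nilpotency (or at least requires a separate argument), and the quantitative bound $\alpha_1$ is precisely what has to be extracted from the Toeplitz structure. The paper supplies exactly this content through the explicit inclusions $\mathfrak{N}_0\mathfrak{N}_k\subset\mathfrak{N}_{k+1}$ and $\mathfrak{N}_k\mathfrak{N}_0\subset\mathfrak{N}_{k+1}$ for its filtration (which is not literally $\{\mathcal{R}^k\}$ but contains it, and reaches $\{0\}$ at step $\alpha_1-1$). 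To complete your proof you would need to carry out the analogous computation: show that multiplying two elements of $\mathcal{R}$ strictly shifts each $\mathcal{T}_{rs}$ one step up the Toeplitz grading, so that $\mathcal{R}^{\alpha_1}=0$, and then read the order bound $\alpha_1-1$ and the class bound from the resulting central series $\mathbb{U}\supset\mathbb{U}_1\supset\cdots$ exactly as the paper does. Without that computation the key quantitative conclusions of the lemma are asserted, not proved.
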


\begin{proof}
First, we examine the set $\mathbb{U}$ of all nonsingular matrices of the form (\ref{0T0}) such that their diagonal blocks are block upper-triangular Toeplitz matrices with identities as the diagonal blocks.

%

For 
$k\in \{1,\ldots,\alpha_1-1\}$ let $\mathfrak{N}_k$ be the set of nonsingular matrices 
of the form (\ref{0T0}) 
with $\mathcal{T}_{rs}=T(0,\ldots,0,A_{k}^{rs},A_{k+1}^{rs},\ldots,A_{b_{rs}-1}^{rs})$ (i.e. $A_0^{rs}=\ldots=A_{k-1}^{rs}=0$) for $b_{rs}>k$ and $\mathcal{T}_{rs}=0$ for $k\geq b_{rs}$, and such that all $A_{k}^{rr}=0$.
We have 
\[
\mathbb{U}-\mathcal{I}=:\mathfrak{N}_0\supset\mathfrak{N}_1\supset \cdots \supset \mathfrak{N}_{\alpha_{1}-1}=\{0\}.
\]
Sums and products of rectangular upper-triangular Toeplitz matrices of the appropriate size are again rectangular upper-triangular Toeplitz matrices. Moreover,
\[
\mathfrak{N}_k+\mathfrak{N}_k\subset \mathfrak{N}_{k}, \qquad
\mathfrak{N}_0\mathfrak{N}_k\subset \mathfrak{N}_{k+1}, \qquad 
\mathfrak{N}_k\mathfrak{N}_0\subset \mathfrak{N}_{k+1}.
\]
In particular $\mathfrak{N}_k^{\alpha_1-k-1}=\{0\}$, thus matrices in $\mathfrak{N}_k$ are nilpotent. For $\mathcal{N}\in \mathfrak{N}_k$ we have 
\[
(\mathcal{I}+\mathcal{N})^{-1}=\mathcal{I}-\mathcal{N}+\mathcal{N}^{2}-\ldots+(-1)^{\alpha_1-k-1}\mathcal{N}^{\alpha_1-k-1}.  
\]
Hence $\mathbb{U}_k:=\mathcal{I}+\mathfrak{N}_k$ is a unipotent group.
Taking $\mathcal{I}+\mathcal{N}\in \mathbb{U}_k$ and $\mathcal{I}+\mathcal{N}'\in \mathbb{U}$ with $(\mathcal{I}+\mathcal{N}')^{-1}=\mathcal{I}-\mathcal{N}'+(\mathcal{N}')^{2}-\ldots$ we get
\[
(\mathcal{I}+\mathcal{N}')^{-1}(\mathcal{I}+\mathcal{N})(\mathcal{I}+\mathcal{N}')=\mathcal{I}+\big((\mathcal{I}-\mathcal{N}'+(\mathcal{N}')^{2}-\ldots)\mathcal{N}(\mathcal{I}+\mathcal{N}')\big)\in \mathbb{U}_k,
\]
and the commutator is of the form
\begin{align*}
[\mathcal{I}+\mathcal{N},\mathcal{I}+\mathcal{N}']
&=(\mathcal{I}+\mathcal{N})^{-1}(\mathcal{I}+\mathcal{N}')^{-1}(\mathcal{I}+\mathcal{N})(\mathcal{I}+\mathcal{N}')\\
&=\big((\mathcal{I}-\mathcal{N}+\mathcal{N}^{2}-\ldots)(\mathcal{I}-\mathcal{N}'+(\mathcal{N}')^{2}-\ldots)\big)\big((\mathcal{I}+\mathcal{N})(\mathcal{I}+\mathcal{N}')\big)\\
&=(\mathcal{I}-\mathcal{N}-\mathcal{N}'+\mathcal{M}_1)(\mathcal{I}+\mathcal{N}+\mathcal{N}'+\mathcal{M}_2)\\
&=\mathcal{I}+\mathcal{M}_3\in \mathbb{U}_k,
\end{align*}
where $\mathcal{M}_1,\mathcal{M}_2,\mathcal{M}_3\in\mathfrak{N}_{k+1}$.
Hence 
%
\begin{equation}\label{cseq}
\mathbb{U}=\mathbb{U}_0\supset \mathbb{U}_1\supset \cdots \supset \mathbb{U}_{\alpha_{1}-1}
=\{\mathcal{I}\}
\end{equation}
is a central series of normal subgroups, i.e. $[\mathbb{U},\mathbb{U}_{j}]$ is a commutator group of $ \mathbb{U}_{j+1}$.
%

Any $\mathcal{X}\in \mathbb{T}$ (nonsingular and of the form (\ref{0T0})) can be written as $\mathcal{X}=\mathcal{D}\mathcal{U}$, where $\mathcal{U}\in \mathbb{U}$ and $\mathcal{D}\in \mathbb{D}$ is a nonsingular block-diagonal matrix of the form (\ref{0T0}). For $\mathcal{D}_1,\mathcal{D}_2\in \mathbb{D}$ and $\mathcal{U}_1,\mathcal{U}_2\in \mathbb{U}$ we get that $(\mathcal{D}_1\mathcal{U}_1)(\mathcal{D}_2\mathcal{U}_2)^{-1}=\mathcal{D}_1(\mathcal{U}_1\mathcal{U}_2^{-1})\mathcal{D}_2^{-1}$ is of the form (\ref{0T0}), thus $\mathbb{T}$ is a group.
Next, conjugating $\mathcal{I}+\mathcal{N}\in\mathcal{I}+\mathfrak{N}_0=\mathbb{U}$ by $\mathcal{D}\mathcal{U}$ gives
\[
\mathcal{U}^{-1}\mathcal{D}^{-1}(\mathcal{I}+\mathcal{N})\mathcal{D}\mathcal{U}=\mathcal{I}+\mathcal{U}^{-1}\mathcal{D}^{-1}\mathcal{N}\mathcal{D}\mathcal{U} \in \mathbb{U},
\]
This proves normality of $\mathbb{U}$ and concludes the proof.
\end{proof}

\begin{remark}
It would be interesting to know whether (\ref{cseq}) is a lower central sequence or not. Note that the situation seems more involved than in the case of upper-unitriangular matrices, in which the commutators of suitably chosen generators are again generators (see \cite{CK}). 
\end{remark}

\section{Certain block matrix equation}\label{cereq}

%
In this section we consider certain block upper-triangular Toeplitz matrix equation. 
Its solution (Lemma \ref{EqT}) is the key ingredient in the proof of Theorem \ref{stabs}.

Let $\alpha_{1}>\alpha_{2}>\ldots >\alpha_{N}$ and $m_1,\ldots,m_N\in \mathbb{N}$. Suppose 
\begin{align}\label{BBF}
&\mathcal{B}=\bigoplus_{r=1}^{N}T\big(B_0^{r},B_1^{r},\ldots,B_{\alpha_r-1}^{r}\big),\,\, \mathcal{C}=\bigoplus_{r=1}^{N}T\big(C_0^{r},C_1^{r},\ldots,C_{\alpha_r-1}^{r}\big),\,\, \mathcal{F}=\bigoplus_{r=1}^{N}E_{\alpha_r}(I_{m_r}),\\
& B_0^{r},C_0^{r}\in GL_{m_r}(\mathbb{C})\cap S_{m_r}(\mathbb{C}), \qquad B_1^{r},C_1^{r},\ldots, B_{\alpha_r-1}^{r}, C_{\alpha_r-1}^{r}\in S_{m_r}(\mathbb{C}),\nonumber
\end{align}
%
where 
$E_{\alpha}(I_{m})=
\begin{bsmallmatrix}
 0                 &      & I_{m}\\
            &   \iddots     &  \\
I_{m}            &           &  0\\
\end{bsmallmatrix}
$
is an $\alpha\times \alpha$ block matrix with $I_m$ on the anti-diagonal and zero-matrices otherwise.
%
We shall solve a matrix equation
\begin{equation}\label{eqFYFIY}
\mathcal{C}=\mathcal{F}\mathcal{X}^{T}\mathcal{F}\mathcal{B} \mathcal{X},
\end{equation}
where $\mathcal{X}=[\mathcal{X}_{rs}]_{r,s=1}^{N}$ is of the form as in (\ref{0T0}).
%
%
%

We first observe a few simple facts. 
%
%
%
%
%
The calculation
\[
(\mathcal{F}\mathcal{X}^{T}\mathcal{F}\mathcal{B} \mathcal{X})^{T}=\mathcal{X}^{T}\mathcal{B}^{T}\mathcal{F}\mathcal{X}\mathcal{F} =\mathcal{F}\mathcal{F}\mathcal{X}^{T}\mathcal{F}(\mathcal{F}\mathcal{B}^{T}\mathcal{F})\mathcal{X}\mathcal{F}=\mathcal{F}(\mathcal{F}\mathcal{X}^{T}\mathcal{F}\mathcal{B}\mathcal{X})\mathcal{F}
\]
shows that for $r\neq s$ we have $(\mathcal{F}\mathcal{X}^{T}\mathcal{F}\mathcal{B}\mathcal{X})_{rs}=0$ if and only if $(\mathcal{F}\mathcal{X}^{T}\mathcal{F}\mathcal{B}\mathcal{X})_{sr}=0$.
When comparing the left-hand side with the right-hand side of (\ref{eqFYFIY}) blockwise, it thus suffices to observe only blocks in the upper-triangular parts of $\mathcal{F}X^{T}\mathcal{F}\mathcal{B} X$ and $\mathcal{C}$.  
Since $(\mathcal{F}\mathcal{X}^{T}\mathcal{F}\mathcal{B}\mathcal{X})_{rs}$ and $\mathcal{C}_{rs}$ are rectangle block Toeplitz and of the same form for each $r, s$,
it is enough to compare the first rows of these blocks.

The following lemma explains the process of computing solutions of (\ref{eqFYFIY}). 
In the proof of Theorem \ref{stabs} we shall obtain (\ref{eqFYFIY}) for $\mathcal{B}$ and $\mathcal{C}$ equal to the identity-matrix. However, due to a possible application when computing isotropy groups of actions similar to (\ref{aos}) and since it makes no serious difference to the proof, we prove a little more general result. 

\vspace{-1mm}

\begin{lemma}\label{EqT}
Let $\mathcal{B}$, $\mathcal{C}$ as in (\ref{BBF}) be given.
Then the dimension of the space of solutions
of (\ref{eqFYFIY}) that are of the form
$\mathcal{X}=[\mathcal{X}_{rs}]_{r,s=1}^{N}$ (partitioned conformally to $\mathcal{B}$, $\mathcal{C}$) with
\begin{equation}\label{EqTX}
\mathcal{X}_{rs}=
\left\{
\begin{array}{ll}
[0\quad \mathcal{T}_{rs}], & \alpha_r<\alpha_s\\
\begin{bmatrix}
\mathcal{T}_{rs}\\
0
\end{bmatrix}, & \alpha_r>\alpha_s\\
\mathcal{T}_{rs},& \alpha_r=\alpha_s
\end{array}\right., \quad 
\begin{array}{l}
\alpha_{1}>\alpha_{2}>\ldots >\alpha_{N}, \quad b_{rs}:=\min\{\alpha_s,\alpha_r\}\\
\mathcal{T}_{rs}=T\big(A_0^{rs},A_1^{rs},\ldots,A_{b_{rs}-1}^{rs}\big),\quad A_j^{rs}\in \mathbb{C}^{m_r\times m_s}
\end{array}
\end{equation}
%
%
%
is $\sum_{r=1}^{N}\alpha_r m_r\big(\tfrac{1}{2}(m_r-1)+\sum_{s=1}^{r-1}m_s\big)$. In particular, the general solution
satisfies the following properties:
\begin{enumerate}[label={(\alph*)},ref={\alph*},itemsep=5pt,leftmargin=25pt,itemindent=0pt]
\item \label{EqT2} The entries of $A_0^{rr}$ for $r\in \{1,\ldots,N\}$ can be taken so that $A_0^{rr}$ is any solution of the equation $C_0^{r}=(A_0^{rr})^{T}B_0^{r}A_0^{rr}$.
If $N\geq 2$ the entries of $A_j^{rs}$ for $r,s\in \{1,\ldots,N\}$ with $r>s$ and $j\in \{0,\ldots,\alpha_{r}-1\}$ can be taken as free variables. 
\item \label{EqT3} 
Assuming (\ref{EqT2}) and choosing the entries of matrices $Z_j^{r}=-Z_j^{r}\in \mathbb{C}^{m_r\times m_r}$ for $r\in \{1,\ldots,N\}$, $\alpha_r-1\geq j\geq 1$ as free variables, the remaining entries of $\mathcal{X}$ are computed by the following algorithm:

\vspace{-2mm}
\hspace{-10mm}

\begin{algorithmic}
\State $\Psi_n^{krs}:=\sum_{j=0}^{n} \sum_{l=0}^{n-j}(A_j^{kr})^{T} B_{n-j-l}^{k}A_l^{ks}$
\For {$j=0:\alpha_1-1$}
    \If {$r\in \{1,\ldots,N\}$, $1\leq j\leq \alpha_r-1$}
    \State $A_j^{rr}=A_0^{rr}-A_0^{rr}(C_0^{r})^{-1}\big(Z_j^{r}+\sum_{l=1}^{j-1}\sum_{m=0}^{n-j}( A_{l}^{rr})^{T} B_{n-j-m}^{k}A_m^{kr}$\\
    \qquad \qquad \qquad \qquad \qquad \qquad \quad $+\sum_{k=1}^{r-1}\Psi_{j-\alpha_k+\alpha_r}^{rr}+\sum_{k=r+1}^{N}\Psi_{j-\alpha_{r}+\alpha_k}^{rr}\big)$
    \EndIf
    \For {$p=1:N-1$}
        \If {$r\in \{1,\ldots,N\}$, $j\leq \alpha_{r+p}-1$, $r+p\leq N$}
        \small
        \State \hspace{-3mm} $A_j^{r(r+p)}=-A_0^{r(r+p)}(C_0^{r})^{-1}\big(
        \sum_{l=1}^{j}\sum_{m=0}^{n-j}( A_{l}^{rr})^{T} B_{n-j-m}^{k}A_m^{k(r+p)}+\sum_{k=1}^{r-1}\Psi_{j-\alpha_k+\alpha_r}^{r(r+p)}$\\
        \qquad \qquad \qquad \qquad \qquad \qquad \qquad \qquad  $+ \sum_{k=r+1}^{r+p}\Psi_{j}^{r(r+p)}+\sum_{k=r+p+1}^{N}\Psi_{j-\alpha_{r+p}+\alpha_k}^{r(r+p)}\big),$   
\normalsize
        \EndIf
    \EndFor
\EndFor
\end{algorithmic}
For simplicity, in this algorithm we define $\sum_{j=l}^{n}a_j=0$ if $l>n$, and it is understood that the inner loop (i.e. for p =1 : N-1) is not performed for $N=1$. 
\end{enumerate}

\vspace{-1mm}

Furthermore, assume that $\mathcal{B}$ and $\mathcal{C}$ are real. Then the solution $\mathcal{X}$ is real 
if and only if 
\begin{enumerate}[label={(\roman*)},ref={\roman*},itemsep=2pt,leftmargin=25pt,itemindent=0pt]
\item Matrices $B_0^{r}$ and $C_0^{r}$ in (\ref{BBF}) have the same inertia
for all $r\in \{1,\ldots,N\}$.
\item Matrices $A_j^{rs}$ with $r>s$, $j\in \{0,\ldots,\alpha_{r}-1\}$, $N\geq 2$, matrices $A_0^{rr}$, and matrices $Z_j^{r}$ for $1 \leq j\leq \alpha_r-1$ ($r,s\in \{1,\ldots,N\}$) in (\ref{EqT2}) and (\ref{EqT3}) are chosen real.
\end{enumerate}
\end{lemma}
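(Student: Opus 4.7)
The plan is to expand equation (\ref{eqFYFIY}) block by block and, within each block, to match Toeplitz coefficients one at a time. As noted just before the statement, the identity $(\mathcal{F}\mathcal{X}^{T}\mathcal{F}\mathcal{B}\mathcal{X})^{T} = \mathcal{F}(\mathcal{F}\mathcal{X}^{T}\mathcal{F}\mathcal{B}\mathcal{X})\mathcal{F}$ restricts the comparison to upper-triangular blocks $r\le s$, and the block-Toeplitz structure within each such block further reduces it to matching only the first block-row. After absorbing the action of $\mathcal{F}$, which merely reverses the block-anti-diagonal ordering and effectively cancels the zero-padding in the rectangular blocks $\mathcal{X}_{kr}$, the equation for the $(r,s)$-block at Toeplitz position $j$ reads schematically
\begin{equation*}
\sum_{k}\sum_{l,m}\,(A_{l}^{kr})^{T}B_{n-l-m}^{k}A_{m}^{ks}=\delta_{rs}\,C_{j}^{r},
\end{equation*}
where the effective index $n$ is determined by $j$ together with the shifts $\alpha_{k}-\alpha_{r}$ and $\alpha_{k}-\alpha_{s}$ caused by the rectangular shapes. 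This is precisely the combinatorics recorded by the sums $\Psi_{n}^{krs}$ in the algorithm.

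Next I treat the diagonal blocks $r=s$. For $j=0$ the equation reduces to $(A_{0}^{rr})^{T}B_{0}^{r}A_{0}^{rr}=C_{0}^{r}$, a classical symmetric congruence problem; since $B_{0}^{r}$ and $C_{0}^{r}$ are nonsingular and symmetric, it always admits complex solutions, and its solution set is a smooth submanifold of dimension $\binom{m_{r}}{2}$ in $\mathbb{C}^{m_{r}\times m_{r}}$. For $j\ge 1$ the equation becomes
\begin{equation*}
(A_{0}^{rr})^{T}B_{0}^{r}A_{j}^{rr}+(A_{j}^{rr})^{T}B_{0}^{r}A_{0}^{rr}=R_{j}^{rr},
\end{equation*}
with $R_{j}^{rr}$ depending only on data already fixed. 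The persymmetry of $\mathcal{F}\mathcal{X}^{T}\mathcal{F}\mathcal{B}\mathcal{X}$ forces $R_{j}^{rr}$ to be symmetric (by induction on $j$), so the symmetric part of $Y_{j}^{rr}:=(A_{0}^{rr})^{T}B_{0}^{r}A_{j}^{rr}$ is prescribed while its skew-symmetric part is free; parametrising that skew part by the matrix $Z_{j}^{r}$ of the statement and inverting $(A_{0}^{rr})^{T}B_{0}^{r}$ reproduces the explicit formula in the algorithm.

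The strictly upper-triangular case $r<s$ is analogous but unconstrained by symmetry: the resulting equation
\begin{equation*}
(A_{0}^{rr})^{T}B_{0}^{r}A_{j}^{rs}=R_{j}^{rs}
\end{equation*}
is a plain linear system with invertible left coefficient, so $A_{j}^{rs}$ is uniquely determined by previously computed quantities. The strictly lower-triangular entries $A_{j}^{rs}$ with $r>s$ never appear on the left of any equation — they enter only inside the various $R_{*}^{**}$ — hence they can be chosen as free parameters. Organising the computation as an outer loop over $j=0,1,\ldots,\alpha_{1}-1$ and an inner sweep through $s=r,r+1,\ldots,N$, one verifies inductively that at every step the right-hand side uses only already-computed quantities. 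The careful bookkeeping of the shifts $\alpha_{k}-\alpha_{r}$, $\alpha_{k}-\alpha_{s}$ and of the correspondence between the Toeplitz position and the summation index $n$ is the main technical obstacle, and this is exactly what the sums $\Psi_{n}^{krs}$ in the algorithm handle.

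Finally, the dimension count is read off directly: each $A_{0}^{rr}$ contributes $\binom{m_{r}}{2}$ after modding out the symmetric constraint, each of the $\alpha_{r}-1$ skew matrices $Z_{j}^{r}$ contributes a further $\binom{m_{r}}{2}$, and the free strictly lower-triangular blocks contribute $\alpha_{r}m_{r}\sum_{s<r}m_{s}$; summing over $r$ recovers the stated total. For real $\mathcal{B}$, $\mathcal{C}$, Sylvester's law of inertia guarantees that the $j=0$ congruence $C_{0}^{r}=(A_{0}^{rr})^{T}B_{0}^{r}A_{0}^{rr}$ has a real solution if and only if $B_{0}^{r}$ and $C_{0}^{r}$ share their inertia; once such a real $A_{0}^{rr}$ is fixed and the remaining free parameters $Z_{j}^{r}$ and the strictly lower-triangular blocks are chosen real, every right-hand side produced by the recursion stays real, and hence so does $\mathcal{X}$.
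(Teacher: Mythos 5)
Your proposal follows essentially the same route as the paper: reduce to matching the first block-row of each upper-triangular block of $\mathcal{F}\mathcal{X}^T\mathcal{F}\mathcal{B}\mathcal{X}$, split off the $(j,r,r)$ equation as a symmetric congruence $(A_0^{rr})^T B_0^r A_j^{rr} + (A_j^{rr})^T B_0^r A_0^{rr} = R_j^{rr}$, the $(j,r,s)$ equation for $r<s$ as a plain linear system, leave the lower-triangular blocks free, and count parameters. Your argument that the right-hand side $R_j^{rr}$ is symmetric is a slicker shortcut than the paper's: you invoke the algebraic persymmetry $\mathcal{M}^T=\mathcal{F}\mathcal{M}\mathcal{F}$ of $\mathcal{M}=\mathcal{F}\mathcal{X}^T\mathcal{F}\mathcal{B}\mathcal{X}$, which, combined with each $\mathcal{M}_{rr}$ being block upper-triangular Toeplitz, forces every Toeplitz coefficient of $\mathcal{M}_{rr}$ to be symmetric; the paper instead verifies $\Psi_n^{krs}=(\Psi_n^{ksr})^T$ by a direct index computation in equation (\ref{simetrija}). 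Both are valid.

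The one place where you are imprecise enough that a careless reader could go astray is the stated loop order. You write that for fixed $j$ one should do an ``inner sweep through $s=r,r+1,\ldots,N$.'' Read naively as a nested loop over $r$ and then over $s$, this order fails: to compute $A_j^{r(r+p)}$ the sums $\sum_{k=r+1}^{r+p}\Psi_j^{k\,r\,(r+p)}$ require the quantities $A_{j'}^{k(r+p)}$ for $r<k\le r+p$ and $j'\le j$, which lie at \emph{strictly smaller} off-diagonal distance $r+p-k<p$, including the diagonal $A_j^{(r+p)(r+p)}$. In a row-by-row sweep, $(r,r+1)$ would be computed before the diagonal entry $(r+1,r+1)$ that it needs. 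The correct inner loop, as in the paper's algorithm, is by off-diagonal distance $p=0,1,\ldots,N-1$ (first all diagonal blocks, then all first superdiagonal blocks, and so on), with $r$ varying freely within each fixed $(j,p)$. Your claim that ``at every step the right-hand side uses only already-computed quantities'' is true for the correct order, but the order you name does not obviously deliver it; this dependency check is the crux of the lemma's algorithmic content and deserves to be carried out, as the paper does with the explicit list of indices $A_{\widetilde{j}}^{r's'}$ in (\ref{induA}). Aside from this, the dimension count and the real-case argument via Sylvester's law of inertia match the paper's.
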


\vspace{-1mm}

For the sake of clarity we point out 
the importance of
%
the correct order of calculating the entries of $\mathcal{X}$ in Lemma \ref{EqT}. It is essential for the proof of the lemma.
%
%
%
%
Recall first that by (\ref{EqT2}) (when $N\geq 2$) all entries of the blocks below the main diagonal of $\mathcal{X}=[\mathcal{X}_{rs}]_{r,s=1}^{N}$ can be chosen freely ($\sum_{r=1}^{N}\sum_{s=1}^{r-1}\alpha_r m_rm_s$ free variables).
Next, we compute the diagonal entries of the blocks in the upper triangular part of $\mathcal{X}$. We first obtain the diagonal entries of the main diagonal blocks $\mathcal{X}_{rr}$ for $r\in \{1,\ldots,N\}$; they add $\sum_{r=1}^{N}\frac{1}{2}m_r(m_r-1)$ to the dimension of the solution space (see (\ref{EqT2}) again). Secondly, step $j=0$, $p=1$ (if $N\geq 2$) of the algorithm in (\ref{EqT3}) yields the diagonal entries of the first upper off-diagonal blocks of $\mathcal{X}$ (i.e. $(\mathcal{X}_{r(r+1)})_{11}=A_{0}^{r(r+1)}$). Further, step $j=0$, $p=2$ gives the diagonal entries of the second upper off-diagonal blocks of $\mathcal{X}$ (i.e. $(\mathcal{X}_{r(r+2)})_{11}=A_{0}^{r(r+2)}$), step $j=0$, $p=3$ gives the diagonal entries of the third upper off-diagonal blocks of $\mathcal{X}$ (i.e. $(\mathcal{X}_{r(r+3)})_{11}=A_{0}^{r(r+3)}$), and soforth. In the same fashion 
the step for fixed $j\in \{1,\ldots,\alpha_1-1\}$ and $p\in \{0,\ldots,N\}$ yields the entries on the $j$-th upper off-diagonals of the $p$-th upper off-diagonal blocks of $\mathcal{X}$ (i.e. $(\mathcal{X}_{r(r+p)})_{1(j+1)}=A_{j+1}^{r(r+p)}$ with $r+p\leq N$, provided that $j\leq \alpha_{r+p}-1$). Finally, at step $j=\alpha_1-1$, $p=0$ we compute $(\mathcal{X}_{11})_{1\alpha_1}=A_{\alpha_1-1}^{11}$. Note that when calculating each entry $A_j^{rr}\in \mathbb{C}^{m_r\times m_r}$,
we add $\frac{1}{2}m_r(m_r-1)$ free variables. 
Furthermore, this algorithmic procedure allows to compute each entry from the entries that are already known.


\begin{proof}[Proof of Lemma \ref{EqT}]
%
%
%
The idea is to write the equation (\ref{eqFYFIY}) entrywise as a system of several simpler matrix equations and then consider them in an appropriate order.

First, we analyze the right-hand side of the equation (\ref{eqFYFIY}) for $\mathcal{B}$, $\mathcal{F}$ of the form (\ref{BBF}) and $\mathcal{X}=[\mathcal{X}_{rs}]_{r,s=1}^{N}$ with blocks as in (\ref{EqTX}). To simplify the notation we set $\mathcal{Y}:=\mathcal{B}\mathcal{X}$ and $\widetilde{\mathcal{X}}:=\mathcal{F}X^{T}\mathcal{F}$. 
The entries in the $j$-th column and in the first row of $(\widetilde{\mathcal{X}}\mathcal{Y})_{rs}$ 
are obtained by multiplying the first rows of the blocks $\widetilde{\mathcal{X}}_{r1},\ldots, \widetilde{\mathcal{X}}_{rN}$ with the $j$-th columns of the blocks $(\mathcal{Y})_{1s},\ldots, (\mathcal{Y})_{Ns}$, respectively, and then adding them:
\begin{equation}\label{YSrs1j}
((\widetilde{\mathcal{X}}\mathcal{Y})_{rs})_{1j}=\sum_{k=1}^{N}(\widetilde{\mathcal{X}}_{rk})_{(1)}(\mathcal{Y}_{ks})^{(j)}, \qquad r,s\in \{1,\ldots,N\}, \quad j\in \{1,\ldots,\alpha_s\}.
\end{equation}
As mentioned in the discussion in the beginning of this section it suffices analyse the upper-triangular blocks of $\widetilde{\mathcal{X}}\mathcal{Y}$: 
%
\vspace{-1mm}
\begin{align*}
((\widetilde{\mathcal{X}}\mathcal{Y})_{r(r+p)})_{1j}
=  \sum_{k=1}^{N}(\widetilde{\mathcal{X}}_{rk})_{(1)}(\mathcal{Y}_{k(r+p)})^{(j)}, \qquad 1\leq j\leq \alpha_{r+p},\quad 0\leq p \leq N-r.
\end{align*}
\vspace{-1mm}
When $N=1$ (hence $r=1$, $p=0$) we have
%
\begin{align}\label{f00}
((\widetilde{\mathcal{X}}\mathcal{Y})_{11})_{1j}
 =
 (\widetilde{\mathcal{X}}_{11})_{(1)}((\mathcal{Y})_{11})^{(j)},
\end{align}
\vspace{-1mm}
while for $N\geq 2$ we obtain 
\vspace{-1mm}
%
%
\begin{align}\label{YSp1}
((\widetilde{\mathcal{X}}\mathcal{Y})_{1(1+p)})_{1j}= & (\widetilde{\mathcal{X}}_{11})_{(1)}(\mathcal{Y}_{1(1+p)})^{(j)}+\sum_{k=2}^{N}(\widetilde{\mathcal{X}}_{1k})_{(1)}(\mathcal{Y}_{k(1+p)})^{(j)}, \qquad (r=1)\\
%
\vspace{-1mm}
\label{f1}
((\widetilde{\mathcal{X}}\mathcal{Y})_{r(r+p)})_{1j}= & (\widetilde{\mathcal{X}}_{rr})_{(1)}(\mathcal{Y}_{r(r+p)})^{(j)}+\sum_{k=r+1}^{N}(\widetilde{\mathcal{X}}_{rk})_{(1)}(\mathcal{Y}_{k(r+p)})^{(j)}\\
\vspace{-1mm}
  &+\sum_{k=1}^{r-1}(\widetilde{\mathcal{X}}_{rk})_{(1)}(\mathcal{Y}_{k(r+p)})^{(j)}, \qquad 1<r<N,  \nonumber\\
\vspace{-1mm}
\label{fN}
((\widetilde{\mathcal{X}}\mathcal{Y})_{NN})_{1j}
= & (\widetilde{\mathcal{X}}_{NN})_{(1)}(\mathcal{Y}_{NN})^{(j)}+ \sum_{k=1}^{N-1}(\widetilde{\mathcal{X}}_{Nk})_{(1)}(\mathcal{Y}_{kN})^{(j)}, \qquad (r=N).
\end{align}
\vspace{-1mm}
For any $r,k\in \{1,\ldots,N\}$ we have
%
%
\begin{align}\label{ETE}
&E_{a_{rk}}(I_{m_k})\big(T(A_0,A_1,\ldots,A_{b_{rk}-1})\big)^{T}E_{a_{rk}}(I_{m_r})=T(A_0^{T},A_1^{T},\ldots,A^{T}_{b_{rk}-1}),
\end{align}
and it further implies
%
\begin{align}\label{AkrT}
\widetilde{\mathcal{X}}_{rk}
=
E_{\alpha_r}(I_{m_r})\mathcal{X}_{kr}^{T} E_{\alpha_k}(I_{m_k})
&=
\left\{
\begin{array}{cc}
\begin{bsmallmatrix}
\widetilde{\mathcal{T}}_{rk}\\
0
\end{bsmallmatrix}, 
& \alpha_r >\alpha_k \\
\begin{bsmallmatrix}
0 & \widetilde{\mathcal{T}}_{rk}
\end{bsmallmatrix}, & \alpha_r<\alpha_k\\
\widetilde{\mathcal{T}}_{rk}, & \alpha_r=\alpha_k
\end{array}
\right., \quad
\widetilde{T}_{rk}=
T\big((A_0^{kr})^{T},\ldots,(A_{b_{kr}}^{kr})^{T}\big),\nonumber\\
%
%
(\widetilde{\mathcal{X}}_{rk})_{(1)}
&=\left\{
\begin{array}{ll}
\begin{bsmallmatrix}
(A_0^{kr})^{T} & (A_1^{kr})^{T} & \ldots & (A_{a_{k}-1}^{kr})^{T} 
\end{bsmallmatrix}, & \alpha_k\leq \alpha_r\\
\begin{bsmallmatrix}
0 & \ldots & 0 & (A_0^{kr})^{T} & \ldots & (A_{\alpha_r-1}^{kr})^{T}
\end{bsmallmatrix}, & \alpha_k > \alpha_r
\end{array}
\right..
\end{align}
We define $\Phi_{n}^{ks}:=\sum_{j=0}^{n} B_{n-j}^{k}A_j^{ks}$ and observe that
\begin{align}\label{YSP}
&\mathcal{Y}_{ks}
=T\big(B_0^{k},B_1^{k},\ldots,B_{\alpha_k-1}^{k}\big)
\left\{
\begin{array}{ll}
\begin{bsmallmatrix}
\mathcal{T}_{ks}\\
0
\end{bsmallmatrix}, 
& \alpha_k >\alpha_s \\
\begin{bsmallmatrix}
0 & \mathcal{T}_{ks}
\end{bsmallmatrix}, & \alpha_k<\alpha_s\\
\mathcal{T}_{ks}, & \alpha_k=\alpha_s
\end{array}
\right.
=
\left\{
\begin{array}{ll}
\begin{bsmallmatrix}
\mathcal{S}_{ks}\\
0
\end{bsmallmatrix}, 
& \alpha_k >\alpha_s \\
\begin{bsmallmatrix}
0 & \mathcal{S}_{ks}
\end{bsmallmatrix}, & \alpha_k<\alpha_s\\
\mathcal{S}_{ks}, & \alpha_k=\alpha_s
\end{array}
\right., \\
&\mathcal{S}_{ks}=
T\big(B_0^{k},B_1^{k},\ldots, B_{b_{ks}-1}^{k}\big)T\big(A_0^{ks},A_1^{ks},\ldots,A_{b_{ks}-1}^{ks}\big)
=T\big(\Phi_0^{ks},\Phi_1^{ks},\ldots, \Phi_{b_{ks}-1}^{ks}\big).\nonumber
\end{align}
%
%
%
%
%

We begin with the calculation of matrices $A_0^{rr}$ for $r\in \{1,\ldots,N\}$. Since
\[
(\widetilde{\mathcal{X}}_{rk})_{(1)}=\left\{
\begin{array}{ll}
\begin{bsmallmatrix}
(A_0^{kr})^{T} & * & \ldots & *
\end{bsmallmatrix}, & k\geq r\\
\begin{bsmallmatrix}
0 & * & \ldots & *
\end{bsmallmatrix}, & k<r
\end{array}
\right.,\qquad
(\mathcal{Y}_{kr})^{(1)}=
\left\{
\begin{array}{ll}
\begin{bsmallmatrix}
B_0^{k}A_0^{kr} \\
0\\
\vdots\\
0
\end{bsmallmatrix}, & k\leq r\\
0, & k>r
\end{array}
\right.,
\]
we get from (\ref{YSrs1j}) for $r=s$, $j=1$ that
\begin{align*}
((\widetilde{\mathcal{X}}\mathcal{Y})_{rr})_{11}=\sum_{k=1}^{N}(\widetilde{\mathcal{X}}_{rk})_{(1)}((\mathcal{Y})_{kr})^{(1)}=(\widetilde{\mathcal{X}}_{rr})_{(1)}(\mathcal{Y}_{rr})^{(1)}=(A_0^{rr})^{T}B_0^{r}A_0^{rr},\quad r\in \{1,\ldots,N\}.
\end{align*}
Together with $(\mathcal{C}_{rr})_{11}=C_0^{r}$ the equation (\ref{eqFYFIY}) yields an equation that gives $A_0^{rr}$:
\begin{equation}\label{GABA}
C_0^{r}=(A_0^{rr})^{T}B_0^{r}A_0^{rr}, \qquad r\in \{1,\ldots,N\}.
\end{equation}
%
%
Next, if $N\geq 2$, we fix arbitrarily the blocks below the main diagonal of $[\mathcal{X}_{rs}]_{r,s=1}^{N}$ (hence the blocks above the main 
diagonal of  $[\widetilde{\mathcal{X}}_{rs}]_{r,s=1}^{N}$). This corresponds to (\ref{EqT2}).
%

Proceed with the key step in the proof: an inductive procedure that enables to compute the remaining entries (i.e. the algorithm in (\ref{EqT3})).
We fix $r\in \{1,\ldots,N\}$, $p\in \{0,\ldots, N-r \}$ and $j\leq \alpha_r-1$, but not $p=j=0$. Assuming that we have already determined the matrices 
$A_{\widetilde{j}}^{r's'}$ (with $1 \leq r', s'\leq N$) for 
%
\begin{align}\label{induA}
&j\geq 1, \widetilde{j}\in \{0,\ldots,j-1\}, s'\geq r' 
\quad\textrm{or}\quad p\geq 1, \widetilde{j}=j, r'\leq s'\leq r'+p-1 \\
&\textrm{ or } s'\leq r', \widetilde{j}\in \{0,\ldots,b_{r's'}-1\}, N\geq 2 \nonumber
\end{align}
%
we shall compute $A_j^{r(r+p)}$.
Essentially, we shall solve the equation $(\mathcal{C}_{r(r+p)})_{1j}=((\widetilde{\mathcal{X}}\mathcal{Y})_{r(r+p)})_{1j}$ (see (\ref{eqFYFIY})). By a careful analysis of the structures of  
$(\widetilde{\mathcal{X}}_{rk})_{(1)}$ and $(\mathcal{Y}_{k(r+p)})^{(j)}$ in formulas (\ref{YSp1}), (\ref{f1}), (\ref{fN}), we shall reduce this equation to a simple linear matrix equation in $A_j^{r(r+p)}$ (and possibly $(A_j^{r(r+p)})^{T}$) with coefficients depending only on $A_{\widetilde{j}}^{r's'}$ for (\ref{induA}).

For the sake of clarity we set the notation ($n\in \mathbb{Z}$, $k,r,s\in \{1,\ldots,N\}$):
\begin{align}\label{Prsk}
\small
\Psi^{krs}_{n}:=
\left\{
\begin{array}{ll}
\begin{bsmallmatrix}
(A_0^{kr})^{T} & (A_1^{kr})^{T} & \ldots & (A_{n}^{rr})^{T} 
\end{bsmallmatrix}
\begin{bsmallmatrix}
\Phi_{n}^{ks} \\
\vdots \\
\Phi_{0}^{ks} \\
\end{bsmallmatrix}, & n\geq 0
\\
0, & n<0
\end{array}
\right.
=
\left\{
\begin{array}{ll}
\sum_{j=0}^{n}(A_j^{kr})^{T}\Phi_{n-j}^{ks}, & n\geq 0\\
0, & n<0
\end{array}
\right.. 
\normalsize
\end{align}
%
Note that:
\begin{align}\label{simetrija}
\Psi^{krs}_{n}
&=\sum_{j=0}^{n}(\Phi_j^{kr})^{T}A_{n-j}^{ks} 
=\sum_{j=0}^{n}\sum_{l=0}^{j}(A_l^{kr})^{T}( B_{j-l}^{r})^{T}A_{n-j}^{ks}
=\sum_{l=0}^{n}\sum_{j=l}^{n}(A_l^{kr})^{T}( B_{j-l}^{r})^{T}A_{n-j}^{ks}\nonumber\\
&=\sum_{l=0}^{n}\sum_{j'=0}^{n-l}(A_{l}^{kr})^{T}( B_{j'}^{r})^{T}A_{n-l-j'}^{ks}
=\sum_{l=0}^{n}(A_l^{kr})^{T}\Phi_{n-l}^{ks}=(\Psi^{ksr}_{n})^{T} .
\end{align}
%
%
%

Since
$(\widetilde{\mathcal{X}}_{rr})_{(1)}=
\begin{bsmallmatrix}
(A_0^{rr})^{T} &  \ldots & (A_{\alpha_{r}-1}^{rr})^{T}
\end{bsmallmatrix}
$
and
\begin{align*}
&(\mathcal{Y}_{rr})^{(\alpha_r-1)}=
\begin{bsmallmatrix}
\Phi_{\alpha_r-1}^{rr} \\
\vdots\\
\Phi_0^{rr}
\end{bsmallmatrix},\qquad
(\mathcal{Y}_{r(r+p)})^{(j+1)}=
\begin{bsmallmatrix}
\Phi_j^{r(r+p)} \\
\vdots\\
\Phi_0^{r(r+p)}\\
0\\
\vdots\\
0
\end{bsmallmatrix},\quad  j< \alpha_s-1 \textrm{ or } p\geq 1,
\end{align*}
the first term of (\ref{f00}), (\ref{YSp1}), (\ref{f1}), (\ref{fN}) is:
%
\begin{align}\label{xijrp}
&(\widetilde{\mathcal{X}}_{rr})_{(1)}(\mathcal{Y})_{rr}^{(j+1)}=\Psi_j^{rrr}=(A_0^{rr})^{T}B_0^{r}A_j^{rr}+(A_j^{rr})^{T}B_0^{r}A_0^{rr}+\Xi(j,r,0),\quad (p=0)\nonumber\\
&(\widetilde{\mathcal{X}}_{rr})_{(1)}(\mathcal{Y})_{r(r+p)}^{(j+1)}=\Psi_j^{rr(r+p)}=
(A_0^{rr})^{T}B_0^{r}A_j^{r(r+p)}+\Xi(j,r,p),\qquad p\geq 1,\nonumber\\
%
&\qquad \qquad
\Xi(j,r,p):=
\left\{
\begin{array}{ll}
\sum_{l=1}^{j-1}A_{l}^{rr}\Phi_{j-l}^{rr}, & j \geq 1, p=0 \\
\sum_{l=1}^{j} A_{l}^{r}\Phi_{j-l}^{r(r+p)}, & j \geq 0, p\geq 1 \\
\end{array}
\right..
\end{align}
%
%
(For simplicity we have defined $\sum_{l=1}^{n}a_l=0$ for $n<l$.)

When $N\geq 2$ the second term in (\ref{YSp1}) and (\ref{f1}) for $j+1$ instead of $j$ consists of summands $(\widetilde{\mathcal{X}}_{rk})_{(1)}(\mathcal{Y}_{k(r+p)})^{(j+1)}$ with $k\geq r+1$ and such that
%
\begin{align*}
&(\widetilde{\mathcal{X}}_{rk})_{(1)}=
\begin{bsmallmatrix}
(A_0^{kr})^{T} &  \ldots & (A_{\alpha_{r}-1}^{kr})^{T}
\end{bsmallmatrix},\\ 
%
\small
& (\mathcal{Y}_{(r+p)(r+p)})^{(j+1)}=
\begin{bsmallmatrix}
\Phi_j^{r(r+p)} \\
\vdots\\
\Phi_0^{r(r+p)}
\end{bsmallmatrix},  \qquad
(\mathcal{Y}_{k(r+p)})^{(j+1)}=
\left\{\begin{array}{ll}
\begin{bsmallmatrix}
\Phi_j^{r(r+p)} \\
\vdots\\
\Phi_0^{r(r+p)}\\
0\\
\vdots\\
0
\end{bsmallmatrix}, & r+p > k,\\
\begin{bsmallmatrix}
\Phi_{j-\alpha_{r+p}-\alpha_k}^{r(r+p)} \\
\vdots\\
\Phi_0^{r(r+p)}\\
0\\
\vdots\\
0
\end{bsmallmatrix}, & k> r+p
,
\end{array}
\right.
.
\end{align*}
\normalsize
Hence for $N\geq r+1\geq 2$:
\begin{align}\label{thjrp}
\Theta(j,r,p):=
& \sum_{k=r+1}^{N}(\widetilde{\mathcal{X}}_{rk})_{(1)}(\mathcal{Y}_{k(r+p)})^{(j+1)}\\
=
&\left\{\begin{array}{ll}
\sum_{k=r+1}^{N}\Psi_{j-\alpha_{r}+\alpha_k}^{krr}, & j\geq 1, p=0\\
\sum_{k=r+1}^{r+p}\Psi_{j}^{kr(r+p)}
+\sum_{k=r+p+1}^{N}\Psi_{j-\alpha_{r+p}+\alpha_k}^{kr(r+p)}, & j\geq 0, p\geq 1. 
\end{array}
\right.\nonumber
\end{align}
%
(For simplicity, we defined $\sum_{k=r+p+1}^{N}\Psi_{j-\alpha_{r+p}-\alpha_k}^{r(r+p)}=0$ for $r+p+1>N$.)

Finally, the third term in (\ref{f1}) and the second term in (\ref{fN}) (with $N\geq 2$) contain of summands which are products of matrices 
\[
(\widetilde{\mathcal{X}}_{rk})_{(1)}=
\begin{bsmallmatrix}
0& \ldots & 0 &(A_0^{kr})^{T}  & \ldots &(A_{b_{kr}}^{kr})^{T} 
\end{bsmallmatrix},\qquad
(\mathcal{Y}_{k(r+p)})^{(j+1)}=
\begin{bsmallmatrix}
\Phi_j^{k(r+p)} \\
\ldots\\
\Phi_0^{k(r+p)}\\
0\\
\ldots\\
0
\end{bsmallmatrix}, \quad 1\leq k\leq r-1,
\]
hence 
\begin{equation}\label{lajrp}
\Lambda(j,r,p):=\sum_{k=1}^{r-1}(\widetilde{\mathcal{X}}_{rk})_{(1)}(\mathcal{Y}_{k(r+p)})^{(j+1)}=\sum_{k=1}^{r-1}\Psi_{j-\alpha_k+\alpha_r}^{kr(r+p)}.
\end{equation}
%
%
%

We set the extensions by $0$:
\begin{align*}
&\widetilde{\Xi}(j,r,p)=\left\{\begin{array}{ll}
\Xi(j,r,p), & j\geq 2,p\geq 0\\
0,   &     \textrm{otherwise}
\end{array}
\right.,\qquad
\widetilde{\Theta}(j,r,p)=\left\{\begin{array}{ll}
\Theta(j,r,p), & N\geq r+1\geq 2\\
0,   &     \textrm{otherwise}
\end{array}
\right.\\
&\widetilde{\Lambda}(j,r,p)=\left\{\begin{array}{ll}
\Lambda(j,r,p), & N\geq r\geq 2\\
0,   &     \textrm{otherwise}
\end{array}
\right.
\end{align*}
and define
\begin{equation}\label{Djrp}
D_j^{r(r+p)}:=\widetilde{\Xi}(j,r,p)+\widetilde{\Theta}(j,r,p)+\widetilde{\Lambda}(j,r,p).
\end{equation}

The equation $(\mathcal{C}_{r(r+p)})_{1j}=((\widetilde{\mathcal{X}}\mathcal{Y})_{r(r+p)})_{1j}$ combined with (\ref{f00}), (\ref{YSp1}), (\ref{f1}), (\ref{fN}) and with (\ref{xijrp}), (\ref{thjrp}), (\ref{lajrp}), (\ref{Djrp}) yields:
\begin{align}\label{eqABD}
(A_0^{rr})^{T}B_0^{r}A_j^{r(r+p)}                          &=-D_j^{r(r+p)}, \qquad p\geq 1,\\
(A_0^{rr})^{T}B_0^{r}A_j^{rr}+(A_j^{rr})^{T}B_0^{r}A_0^{rr}&=C_j^{r}-D_j^{rr}, \qquad p=0.\nonumber
\end{align}
Moreover, from (\ref{simetrija}) it follows that $\Psi^{krs}_{n}$ for $n\geq 0$ and $r=s$ is symmetric, thus $\Xi(j,r,0)$, $\Theta(j,r,0)$, $\Lambda(j,r,0)$ (and hence $C_j^{r}-D_j^{rr}$) are symmetric, too.

To get $A_j^{r(r+p)}$ for $p\geq 1$ we solve a simple equation of the form $A^{T}X=B$ with given nonsingular $A$ and arbitrary $B$,
while to get $A_j^{rr}$ we solve the equation of the form $A^{T}X+X^{T}A=B$ with known nonsingular $A$ and symmetric $B$; 
the solution is $X=\frac{1}{2}(A^{T})^{-1}B+(A^{T})^{-1}Z$ with $Z$ skew-symmetric.
In particular,  
$A=(A_0^{rr})^{T}B_0^{r}$ with 
$(A^{T})^{-1}=((A_0^{rr})^{T}B_0^{r})^{-1}=
A_0^{r}(C_0^{r})^{-1}$ (see (\ref{GABA})).
This proves the algorithm in (\ref{EqT3}).

Furthermore, $\Xi(j,r,p)$, $\Theta(j,r,p)$, $\Lambda(j,r,p)$ (thus also $D_j^{r(r+p)}$ and $A_j^{r(r+p)}$) depend on the entries of $A_{\widetilde{j}}^{r's'}$ with (\ref{induA}).
%
It is straightforward to see that the algorithm in (\ref{EqT3}) allows to compute each entry from the entries that are already known. 
Moreover, the entries of $A_{j}^{rs}$ for either $r=s$, $\alpha_r\geq 2$, $j\in \{1,\ldots,\alpha_r-1\}$
or $s>r\geq 1$, $N \geq 2$
are determined uniquely by the entries of all $A_{\widetilde{j}}^{r's'}$ with 
$\widetilde{j}=0$, $s'=r'$ or $s'\leq r'$, $\widetilde{j}\in \{0,\ldots,\alpha_{r'}-1\}$ (chosen in (\ref{EqT2})), by the entries of all $Z_{\widetilde{j}}^{r'}$ with $\widetilde{j}\in \{1,\ldots,j-1\}$ 
(if $j\geq 2$) or $\widetilde{j}=j$, $r=r'$, and when $s>r$, $N \geq 2$ also by the entries of $Z_j^{r'}$ for all $r'$ (chosen in (\ref{EqT3})); $r',s'\in \{1,\ldots,N\}$.

%
%

If $B_0^{r},G_0^{r}$ are real, 
then by Sylvester's theorem the equation (\ref{GABA}) has a real solution $A_0^{rr}$ precisely when $B_0^{r},G_0^{r}$ are of the same inertia. The last statement of the lemma is then apparent.
\end{proof}

\begin{remark}
\begin{enumerate}[label=(\arabic*),ref={P\Roman*},wide=0pt,itemsep=2pt]
\item The equation in (\ref{EqT2}) is of the form $C=X^{T}BX$ with given nonsingular symmetric matrices $B$, $C$.
By Autonne-Takagi factorization (see e.g. \cite[Corolarry 4.4.4]{HornJohn}) 
$B=R^{T}I R$, $C=S^{T}IS$ for some nonsingular $R,S$ and the identity-matrix $I$. The above equation thus reduces to $I=Y^{T}Y$ with $Y=RXS^{-1}$. When 
$B$ and $C$ are real with the same inertia matrix $\widetilde{I}$, i.e. $B=R^{T}\widetilde{I} R$ and $C=S^{T}\widetilde{I} S$ for some real orthogonal $R$ and $S$, we get 
$\widetilde{I}=Y^{T}\widetilde{I} Y$ with $Y=RXS^{-1}$ (real pseudo-orthogonal).

\item One could consider the equation (\ref{eqFYFIY}) even when the diagonal blocks of $\mathcal{B}$, $\mathcal{C}$ are nonsingular. In this more general setting the equation $C=A^{T}BA$ is more involved, while the solution of the equation $A^{T}X+X^{T}A=B$ 
is known (see \cite{Brad}).
\end{enumerate}
\end{remark}

\begin{example}
We solve (\ref{eqFYFIY}) for $\mathcal{F}=E_4(I)\oplus E_2(I)\oplus I$, $\mathcal{B}=\mathcal{C}=\mathcal{I}:=I_4(I)\oplus I_2(I)\oplus I$. 
Set 
\small
\[
\mathcal{Y}=\begin{bmatrix}[cccc|cc|c]
A_1 & B_1 & C_1 & D_1  &  H_1  &  G_1  &  J_1\\
0   & A_1 & B_1   & C_1  &  0  &  H_1  &  0\\
0   & 0   & A_1   & B_1    &  0 & 0 & 0 \\
0   & 0   &  0  &  A_1   & 0 &0 & 0\\
\hline
0   & 0   & N_1 & P_1                         &  A_3  &  B_3  &  J_3\\
0   & 0   & 0   & N_1              &  0    &   A_3  &  0\\
\hline
0   & 0   & 0   & R_1                        &  0    &   R_3    & A_4 
\end{bmatrix}.
\]
\normalsize
We compute:
\small
\setlength{\arraycolsep}{3pt}
\begin{align*}
&\widetilde{\mathcal{Y}}\mathcal{Y}=\begin{bmatrix}[cccc|cc|c]
A_1^{T} & B_1^{T} & C_1^{T} & D_1^{T}  &  N_1^{T}  &  P_1^{T}  &  R_1^{T}\\
0   & A_1^{T} & B_1^{T}   & C_1^{T}  &  0  &   N_1^{T}  &  0\\
0   & 0   & A_1^{T}   & B_1^{T}    &  0 & 0 & 0 \\
0   & 0   &  0  &  A_1^{T}  & 0 &0 & 0\\
\hline
0   & 0   & H_1^{T} & G_1^{T}                        &  A_3^{T}  &  B_3^{T}  &  R_3^{T}\\
0   & 0   & 0   & H_1^{T}              &  0    &   A_3  &  0\\
\hline
0   & 0   & 0   & J_1^{T}                         &  0    &   J_3^{T}    & A_4 
\end{bmatrix}
\begin{bmatrix}[cccc|cc|c]
A_1 & B_1 & C_1 & D_1  &  H_1  &  G_1  &  J_1\\
0   & A_1 & B_1   & C_1  &  0  &   H_1  &  0\\
0   & 0   & A_1   & B_1    &  0 & 0 & 0 \\
0   & 0   &  0  &  A_1   & 0 &0 & 0\\
\hline
0   & 0   & N_1 & P_1                         &  A_3  &  B_3  &  J_3\\
0   & 0   & 0   & N_1              &  0    &   A_3  &  0\\
\hline
0   & 0   & 0   & R_1                        &  0    &   R_3    & A_4 
\end{bmatrix}=
\end{align*}
\scriptsize
\setlength{\arraycolsep}{3pt}
\begin{align*}
&=
\begin{bmatrix}[cccc|cc|c]
A_1^{T}A_1 & A_1^{T}B_1+B_1^{T}A_1 & A_1^{T}C_1+C_1^{T}A_1 & \mbox{\LARGE $*$} &  A_1^{T}H_1+N_1^{T}A_3  &  A_1^{T}G_1+B_1^{T}H_1+N_1^{T}B_3  &  N_1^{T}J_3+R_1^{T}A_4\\
           &                       & + B_1^{T}B_1+N_1^{T}N_1               &          &                         &           +P_1^{T}A_3+R_1^{T}R_3 & +A_1^{T}J_1 \\
           & A_1^{T}A_1 &  A_1^{T}B_1+B_1^{T}A_1   & A_1^{T}C_1+C_1^{T}A_1  &  0  &   A_1^{T}H_1+N_1^{T}A_3  &  0\\
   &                       &                          &       + B_1^{T}B_1+N_1^{T}N_1    &     &                          &   \\
   &                       & A_1^{T}A_1               &  A_1^{T}B_1+B_1^{T}A_1           &  0  & 0                      &      0 \\
   &    &    &  A_1^{T}A_1  &  0   &0 & 0\\
\hline 
   &    & &                      &  A_{3}^{T}A_3  &   A_3^{T}B_3+B_3^{T}A_3 + R_3^{T}R_3   &  A_3^{T}J_3+R_3^{T}A_4\\
   &    &    &              &         &   A_3^{T}A_3  &  0\\
\hline
   &    &    &                       &      &      & A_4 ^{T}A_4
\end{bmatrix}
\end{align*}
\normalsize
By comparing the diagonal of the diagonal blocks of the left-hand side and the right-hand side of $\widetilde{\mathcal{Y}}\mathcal{Y}=\mathcal{I}$ we deduce that $A_1,\ldots,A_4$ are any orthogonal matrices. Next, we choose $N_1$, $P_1$, $R_1$, $R_3$ arbitrarily. The diagonal blocks on the first upper diagonal yield equations $A_1^{T}H_1+N_1^{T}A_3=0$ and $A_3^{T}J_3+R_3^{T}A_4=0$, which further implies $H_1=-A_1N_1^{T}A_3$, $J_3=-A_3R_3^{T}A_4$; note that $(A_1^{T})^{-1}=A_1$, $(A_3^{T})^{-1}=A_3$. The last upper diagonal gives $N_1^{T}J_3+A_1^{T}J_1+R_1^{T}A_4=0$, thus $J_1=A_1(N_1^{T}A_3R_3^{T}A_4-R_1^{T}A_4)$.

By inspecting the first upper diagonal of the main diagonal blocks in $\widetilde{\mathcal{Y}}\mathcal{Y}=\mathcal{I}$ we obtain $A_1^{T}B_1+B_1^{T}A_1=0$ and $A_3^{T}B_3+B_3^{T}A_3+R_3^{T}R_3=0$, so we deduce $B_1,B_3$. Further, $A_1^{T}G_1+B_1^{T}H_1+N_1^{T}B_3 +P_1^{T}A_3+R_1^{T}R_3=0$ (observe the first upper diagonal of the first upper diagonal), so we get $G_1$.

The third and the fourth upper diagonal block of the first principal diagonal block give $A_1^{T}C_1+C_1^{T}A_1+B_1^{T}B_1+N_1^{T}N_1 =0$, $A_1^{T}D_1+B_1^{T}C_1+C_1^{T}B_1+D_1^{T}A_1+N_1^{T}P_1+P_1^{T}N_1+R_1^{T}R_1 =0$ (see $*$), therefore $C_1$, $D_1$ follow, respectively.
\end{example}

The solutions of the equation (\ref{eqFYFIY}) with a block diagonal matrix $\mathcal{C}=\mathcal{B}$ form a group with relatively simple generators. Recall that $\mathbb{U}$ is the set of matrices of the form (\ref{0T0}) with identity-matrices on the diagonals of the diagonal blocks. 

\begin{lemma}\label{lemauni}
The set $\mathbb{X}_{\mathcal{B}}$ of solutions of the equation (\ref{eqFYFIY}) for $\mathcal{C}=\mathcal{B}=\oplus_{r=1}^{N}(\oplus_{j=1}^{\alpha_r}B_{r})$ with $B_{r}\in GL_{m_r}(\mathbb{C})\cap S_{m_r}(\mathbb{C})$ is a semidirect product $\mathbb{X}_{\mathcal{B}}=\mathbb{O}_{\mathcal{B}}\ltimes \mathbb{V}_{\mathcal{B}}$, in which the group $\mathbb{O}_{\mathcal{B}}$ consists of all matrices of the form $\mathcal{Q}=\oplus_{r=1}^{N}(\oplus_{j=1}^{\alpha_r} Q_r)$ with $Q_r\in \mathbb{C}^{m_r\times m_r}$ such that $B_{r}=Q_r^{T}B_{r}Q_r$, and $\mathbb{V}_{\mathcal{B}}:=\mathbb{U}\cap \mathbb{X}_{\mathcal{B}}$ (hence unipotent of order at most $\alpha_1-1$ and in nilpotency class at most $\alpha_1$). Moreover, $\mathbb{V}_{\mathcal{B}}$ is generated by matrices of the form 
\vspace{-1mm}
\begin{align}\label{genV}
&\mathcal{V}=\bigoplus_{r=1}^{N}T(I_{m_r},V_1^{r},\ldots,V_{\alpha_r-1}^{r}),\\
&V_{1}^{r}:=\frac{1}{2}(B_{r})^{-1}Z_{1}^{r}, \qquad 
V_{n+1}^{r}:=\frac{1}{2}(B_{r})^{-1}\big(Z_{n+1}^{r}-\sum_{j=1}^{n}(V_j^{r})^{T}B_{r}V_{n-j+1}^{r}\big),\quad n\geq 1,\nonumber
\end{align}
\vspace{-1mm}
in which all $Z_{n}^{r}$ are skew-symmetric, and by matrices of the form
\vspace{-1mm}
\begin{align}\label{H2ptk}
&\mathcal{H}_{p,t}^{k}(F)=[(\mathcal{H}_{p,t}^{k}(F))_{rs}]_{r,s=1}^{N},\quad p<t, \qquad
(\mathcal{H}_{p,t}^{k}(F))_{rs}=
\left\{
\begin{array}{ll}
[0\quad \mathcal{U}_{rs}], & \alpha_r<\alpha_s\\
\begin{bmatrix}
\mathcal{U}_{rs}\\
0
\end{bmatrix}, & \alpha_r>\alpha_s\\
\mathcal{U}_{rs},& \alpha_r=\alpha_s
\end{array}\right.,
\end{align}
\vspace{-1mm}
where $F\in \mathbb{C}^{m_p\times m_t}$ and
\begin{align*}
&\mathcal{U}_{rs}=\left\{
\begin{array}{ll}
\oplus_{j=1}^{\alpha_r}I_{m_r}, &  r=s,\\
0,                        &  r\neq s 
\end{array}
\right., \quad \{r,s\}\subset\{p,t\},\qquad \nonumber
\mathcal{U}_{rr}=T(I_{m_r},A_1^{r},\ldots,A_{\alpha_r-1}^{rr}), \quad r\in \{p,t\},\\
&A_{j}^{pp}=\left\{\begin{array}{ll}
a_{n-1}B_{p}^{-1}(F^{T}B_tFB_p^{-1})^{n}B_p, & j=n(2k+\alpha-\beta)\\
0,                      & \textrm{otherwise}
\end{array}
\right., \qquad
a_{n}=-\frac{1}{2^{2n+1}}\frac{1}{n+1}{2n \choose n}\nonumber \\
&A_{j}^{tt}=\left\{\begin{array}{ll}
a_{n-1}B_t^{-1}(B_tFB_p^{-1}F^{T})^{n}B_t, & j=n(2k+\alpha-\beta)\\
0,                      & \textrm{otherwise}
\end{array}
\right., \nonumber \\
&\mathcal{U}_{pt}=
N_{\alpha_t}^{k}(F ),
\qquad
\mathcal{U}_{tp}=
N_{\alpha_t}^{k}(-B_p^{-1}F^{T}B_t).
\nonumber
\end{align*}
%
\end{lemma}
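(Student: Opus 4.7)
My plan is to split the argument into three pieces: the semidirect product structure, the membership of the explicit generators in $\mathbb{V}_{\mathcal{B}}$, and the generation statement itself.

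For $\mathbb{X}_{\mathcal{B}} = \mathbb{O}_{\mathcal{B}} \ltimes \mathbb{V}_{\mathcal{B}}$, I apply Lemma \ref{lemanilpo} to any $\mathcal{X} \in \mathbb{X}_{\mathcal{B}} \subset \mathbb{T}$ and factor $\mathcal{X} = \mathcal{D}\mathcal{U}$ with $\mathcal{D} \in \mathbb{D}$ and $\mathcal{U} \in \mathbb{U}$. The leading Toeplitz blocks of $\mathcal{X}_{rr}$ force $\mathcal{D} = \bigoplus_{r=1}^{N}\bigoplus_{j=1}^{\alpha_r} A_0^{rr}$, and Lemma \ref{EqT}(\ref{EqT2}), specialized to $\mathcal{C} = \mathcal{B}$ block-diagonal, yields $(A_0^{rr})^{T} B_r A_0^{rr} = B_r$, i.e.\ $\mathcal{D} \in \mathbb{O}_{\mathcal{B}}$. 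Because the entries of $\mathcal{D}$ are constant on each $\alpha_r$-block, $\mathcal{D}$ commutes with $\mathcal{F}$, and the same $B_r$-orthogonality gives $\mathcal{D}^{-T}\mathcal{B}\mathcal{D}^{-1} = \mathcal{B}$; substituting $\mathcal{X} = \mathcal{D}\mathcal{U}$ into (\ref{eqFYFIY}) then shows $\mathcal{U} \in \mathbb{V}_{\mathcal{B}}$. Combined with the obvious $\mathbb{O}_{\mathcal{B}} \cap \mathbb{V}_{\mathcal{B}} = \{\mathcal{I}\}$ and the normality of $\mathbb{U}$ in $\mathbb{T}$ (restricting to normality of $\mathbb{V}_{\mathcal{B}}$ in $\mathbb{X}_{\mathcal{B}}$), this gives the semidirect product, while the bounds on order and nilpotency class descend directly from $\mathbb{U}$ to its subgroup $\mathbb{V}_{\mathcal{B}}$.

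To verify that $\mathcal{V}$ in (\ref{genV}) lies in $\mathbb{V}_{\mathcal{B}}$, I exploit that $\mathcal{V}$, $\mathcal{F}$, $\mathcal{B}$ all share the same block-diagonal pattern, so (\ref{eqFYFIY}) decouples into $N$ independent blockwise equations; by (\ref{ETE}) each becomes $T(I, (V_1^{r})^T, \ldots) \cdot \bigoplus_{j=1}^{\alpha_r} B_r \cdot T(I, V_1^{r}, \ldots) = \bigoplus_{j=1}^{\alpha_r} B_r$, whose Toeplitz-coefficient comparison is exactly the recursion defining the $V_n^r$ from the skew-symmetric $Z_n^r$. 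For $\mathcal{H}_{p,t}^{k}(F)$, all blocks outside rows and columns $p, t$ contribute only identity terms, so the equation collapses to a $2 \times 2$ Toeplitz subsystem whose diagonal correction coefficients must satisfy a nonlinear functional equation in the Toeplitz variable; its unique symmetric power-series solution is encoded by the shifted Catalan numbers $a_n$ via the generating identity $\sum_{n \geq 0} a_n x^n = -\tfrac{1 - \sqrt{1 - x}}{4x}$, and checking it termwise is the main technical obstacle of the proof---it will require an induction on $n$ combined with the Catalan recurrence together with bookkeeping of the powers $(F^T B_t F B_p^{-1})^{n}$ at Toeplitz position $j = n(2k + \alpha_p - \alpha_t)$.

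Finally, I establish generation by induction down the filtration $\mathcal{I} + \mathfrak{N}_0 \supset \mathcal{I} + \mathfrak{N}_1 \supset \cdots \supset \mathcal{I} + \mathfrak{N}_{\alpha_1 - 1} = \{\mathcal{I}\}$ of Lemma \ref{lemanilpo}. Given $\mathcal{X} \in (\mathcal{I} + \mathfrak{N}_k) \cap \mathbb{V}_{\mathcal{B}}$, Lemma \ref{EqT} parametrizes its class modulo $\mathfrak{N}_{k+1}$ by the free data $\{A_k^{tp} : t > p,\, \alpha_t > k\} \cup \{Z_{k+1}^{r}\}$, since the upper-triangular entries $A_k^{pt}$ at this level are forced by the algorithm and in fact vanish modulo $\mathfrak{N}_{k+1}$ once the lower ones do, by inspection of the $j=0$, $p\geq 1$ step. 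The generator $\mathcal{H}_{p,t}^{k}(F)$ hits only the $A_k^{tp}$ coordinate at leading order (its diagonal corrections sit at strictly higher Toeplitz position $n(2k + \alpha_p - \alpha_t) \geq 2k + 1$), while an instance of $\mathcal{V}$ in (\ref{genV}) with only $Z_{k+1}^{r}$ nonzero hits only $A_{k+1}^{rr} = \tfrac{1}{2} B_r^{-1} Z_{k+1}^{r}$. Choosing the $F_{p,t}$'s and $Z_{k+1}^{r}$'s to match the leading data of $\mathcal{X}$ and forming their product $\mathcal{G} \in \mathcal{I} + \mathfrak{N}_k$, the containment $\mathfrak{N}_{k} \cdot \mathfrak{N}_{l} \subset \mathfrak{N}_{k+l}$ guarantees $\mathcal{X} \cdot \mathcal{G}^{-1} \in \mathcal{I} + \mathfrak{N}_{k+1}$, and iteration of this step terminates in $\mathcal{I}$, expressing $\mathcal{X}$ as a product of matrices of the forms (\ref{genV}) and (\ref{H2ptk}).
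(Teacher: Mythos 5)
Your semidirect-product argument matches the paper's: both use the $\mathbb{D}\ltimes\mathbb{U}$ decomposition from Lemma \ref{lemanilpo} and Lemma \ref{EqT}(\ref{EqT2}) to put the block-diagonal factor into $\mathbb{O}_{\mathcal{B}}$, then use the group property of $\mathbb{X}_{\mathcal{B}}$ to force the unitriangular factor into $\mathbb{V}_{\mathcal{B}}$. Your verification that the block-diagonal $\mathcal{V}$ of (\ref{genV}) lies in $\mathbb{V}_{\mathcal{B}}$ is also the paper's argument: the equation decouples blockwise and a Toeplitz-coefficient comparison reproduces the recursion with a skew-symmetric free parameter at each level. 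The generation step is where you diverge: the paper eliminates the below-diagonal blocks column-by-column (clearing the lowest nonzero position of the $p$-th column via $\mathcal{K}_{p,t}^{k}$, taking $t$ maximal) until it arrives at a block upper-triangular solution that it then argues is block-diagonal (since its inverse is block lower-triangular), whereas you descend the filtration $\mathcal{I}+\mathfrak{N}_0\supset\cdots\supset\{\mathcal{I}\}$, matching the leading free data at each level. Your scheme is conceptually cleaner and makes the role of the free parameters from Lemma \ref{EqT} more transparent, but it needs two small patches: (a) when $\alpha_p-\alpha_t=1$ the diagonal correction of $\mathcal{H}_{p,t}^{k}$ sits at Toeplitz level $2k+1$, which for $k=0$ equals $k+1$ and therefore interferes with the level-$(k+1)$ diagonal freedom; you must fix the $F$'s before computing the residual diagonal and then choose the $Z_{k+1}^{r}$ afterwards (and justify that the residual is of the form $\tfrac12 B_r^{-1}Z$ with $Z$ skew, which follows from $\mathcal{X}\in\mathbb{V}_{\mathcal{B}}$); and (b) the product rule modulo $\mathfrak{N}_{k+1}$ — that $(\mathcal{I}+\mathfrak{N}_k)/(\mathcal{I}+\mathfrak{N}_{k+1})$ is abelian, so matching leading data across the factors is additive — should be stated, though it is immediate from $\mathfrak{N}_k\cdot\mathfrak{N}_k\subset\mathfrak{N}_{k+1}$.

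The real gap is the one you flag yourself: the membership $\mathcal{H}_{p,t}^{k}(F)\in\mathbb{V}_{\mathcal{B}}$, including the derivation of the shifted-Catalan coefficients $a_n=-\tfrac{1}{2^{2n+1}}\tfrac{1}{n+1}\binom{2n}{n}$ and the vanishing pattern of $G_j$, $D_j$, is the bulk of the paper's proof (equations (\ref{eqN1})--(\ref{eqN3}) and the ensuing calculation), and you only gesture at it. Also note a small arithmetic slip: with $a_0=-\tfrac12$ and $a_n=-\tfrac12\sum_{j=0}^{n-1}a_j a_{n-1-j}$ the generating function is $\sum a_n x^n = -\dfrac{1-\sqrt{1-x}}{x}$, not $-\dfrac{1-\sqrt{1-x}}{4x}$; the extra factor of $4$ would give $a_0=-\tfrac18$.
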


\begin{proof}
For any $\mathcal{X}_1,\mathcal{X}_2\in \mathbb{X}_{\mathcal{B}}$ we have: 
\vspace{-1mm}
\begin{align*}
\mathcal{F}(\mathcal{X}_1\mathcal{X}_2^{-1})^{T}\mathcal{F}\mathcal{B} (\mathcal{X}_1\mathcal{X}_2^{-1})
&=\mathcal{F}(\mathcal{X}_2^{-1})^{T}\mathcal{F}\mathcal{F}\mathcal{X}_1^{T}\mathcal{F}\mathcal{B} \mathcal{X}_1\mathcal{X}_2^{-1}
=\mathcal{F}(\mathcal{X}_2^{-1})^{T}\mathcal{F}\mathcal{B}\mathcal{X}_2^{-1}=\\
&=\mathcal{F}(\mathcal{X}_2^{-1})^{T}\mathcal{F}\mathcal{B}(\mathcal{B}^{-1}\mathcal{F}\mathcal{X}_2^{T}\mathcal{F}\mathcal{B})=\mathcal{B}.
\end{align*}
\vspace{-1mm}
Thus $\mathcal{X}_1\mathcal{X}_2^{-1}\in \mathbb{X}_{\mathcal{B}}$, so $\mathbb{X}_{\mathcal{B}}$ is a group.

We describe the structure of $\mathbb{X}_{\mathcal{B}}$. Lemma \ref{EqT} for $\mathcal{C}=\mathcal{B}=\oplus_{r=1}^{N}(\oplus_{j=1}^{\alpha_r}B_{0}^{r})$ implies that $\mathcal{X}\in \mathbb{X}_{\mathcal{B}}$ is of the form (\ref{0T0}) such that its diagonal blocks 
have $Q_0^{r}$ (satisfying $B_0^{r}=(Q_0^{r})^{T}B_0^{r}Q_0^{r}$) on the diagonal. Therefore $\mathcal{X}$
can be written as $\mathcal{X}=\mathcal{Q}\mathcal{Y}$ with $\mathcal{Q}\in \mathbb{O}_{\mathcal{B}}$ and $\mathcal{Y}\in \mathbb{U}$.
Clearly $\mathbb{O}_{\mathcal{B}}\subset \mathbb{X}_{\mathcal{B}}$ (hence $\mathcal{Y}\in \mathbb{X}_{\mathcal{B}}$), thus $\mathbb{X}_{\mathcal{B}}=\mathbb{O}_{\mathcal{B}}\ltimes \mathbb{V}_{\mathcal{B}}$, where $\mathbb{V}_{\mathcal{B}}=\mathbb{X}_{\mathcal{B}}\cap\mathbb{U}$.
Since $\mathbb{V}_{\mathcal{B}}$ is a subgroup of $\mathbb{U}$, it is a normal subgroup in $\mathbb{X}_{\mathcal{B}}$, unipotent of order at most $\alpha_1-1$, and nilpotent of class at most $\alpha_1$ (see Lemma \ref{lemanilpo}).

Next, we find matrices in $\mathbb{X}_{\mathcal{B}}$ that are of a simple form. First, set
%
\begin{align}\label{gen}
&\mathcal{D}_{\alpha,\beta}^{k}=\begin{bmatrix}
\Delta_{11} & \Delta_{12}\\
\Delta_{21} & \Delta_{22}
\end{bmatrix},\qquad \alpha>\beta, \quad 0\leq k\leq \beta-1 \nonumber\\
%
&\Delta_{11}=T(I_{m_1},A_1,\ldots,A_{\alpha-1}), \qquad 
\Delta_{22}=T(I_{m_2},D_1,\ldots,D_{\beta-1}), \qquad \\
&\Delta_{21}=\begin{bmatrix}
0 & N_{\beta}^{k}(F)\\
\end{bmatrix},\qquad
\Delta_{12}=\begin{bmatrix}
T(G_0,G_1,\ldots,G_{\beta-1})\\
0
\end{bmatrix},\nonumber
\end{align}
%
in which $N_{\beta}^{k}(F)$ is a $\beta\times \beta$ block matrix with $F\in \mathbb{C}^{m_1\times m_2}$ on the $k$-th diagonal above the main diagonal and zeros othervise, $A_j\in \mathbb{C}^{m_1\times m_1}$, $D_j\in \mathbb{C}^{m_2\times m_2}$, and $G_j\in \mathbb{C}^{m_2\times m_1}$ for all $j$.
Further, suppose $\mathcal{D}_{\alpha,\beta}^{k}$ is a solution of  
the matrix equation 
\[
\mathcal{B}_{\alpha,\beta}=\mathcal{F}_{\alpha,\beta}(\mathcal{D}_{\alpha,\beta}^{k})^{T}\mathcal{F}_{\alpha,\beta}\mathcal{B}_{\alpha,\beta}\mathcal{D}_{\alpha,\beta}^{k}, \qquad \mathcal{B}_{\alpha,\beta}=I_{\alpha}(B)\oplus I_{\beta}(C),\quad \mathcal{F}_{\alpha,\beta}=E_{\alpha}(I_{m_1})\oplus E_{\beta}(I_{m_2}),
\]
where $B\in  \mathbb{C}^{m_1\times m_1}$ and $C\in  \mathbb{C}^{m_2\times m_2}$.
Blockwise we have
\small
\begin{align}\label{eqN1}
&I_{\alpha}(B)=T(I_{m_1},A_1^{T},\ldots,A_{\alpha-1}^{T})I_{\alpha}(B)T(I_{m_1},A_1,\ldots,A_{\alpha-1})+
\begin{bsmallmatrix}
N_{\beta}^{k}(F^T)\\
0
\end{bsmallmatrix}
I_{\beta}(C)
\begin{bsmallmatrix}
0 & N_{\beta}^{k}(F)\\
\end{bsmallmatrix},\\
&
\label{eqN2}
0=T(I_{m_1},A_1^{T},\ldots,A_{\alpha-1}^{T})I_{\alpha}(B)\begin{bsmallmatrix}
T(G_0,G_1,\ldots,G_{\beta-1})\\
0
\end{bsmallmatrix}+
\begin{bsmallmatrix}
N_{\beta}^{k}(F^{T})\\
0
\end{bsmallmatrix}
I_{\beta}(C)T(I_{m_2},D_1,\ldots,D_{\beta-1}),\\
&
\label{eqN3}
I_{\beta}(C)=T(I_{m_2},D_1^{T},\ldots,D_{\beta-1}^{T})I_{\beta}(C)T(I_{m_2},D_1,\ldots,D_{\beta-1})\\
&\qquad \quad+\begin{bsmallmatrix}
0 & T(G_0^{T},G_1^{T},\ldots,G_{\beta-1}^{T})
\end{bsmallmatrix}
I_{\alpha}(B)
\begin{bsmallmatrix}
T(G_0,G_1,\ldots,G_{\beta-1})\\
0
\end{bsmallmatrix}.\nonumber
\end{align}
\normalsize 
To determine $\mathcal{D}_{\alpha,\beta}^{k}$ we follow the algorithm in Lemma \ref{EqT}.

We first simplify the notation by defining $A_0:=I_{m_1}$ and $D:=I_{m_2}$. By comparing the first row of the left-hand and the right-hand side of (\ref{eqN1}) we get 
\begin{align*}
0=\sum_{j=0}^{n} A_j^{T}BA_{n-j}, \quad 
1\leq n\neq 2k+\alpha-\beta,\qquad 
0=\sum_{j=0}^{2k+\alpha-\beta} A_j^{T}BA_{2k+\alpha-\beta-j}+F^{T}CF, 
\end{align*}
If $\alpha-\beta+k\geq 2$ we satisfy the first equation for $1\leq n\leq 2k+\alpha-\beta-1$ by choosing $A_1=\ldots=A_{2k+\alpha-\beta-1}=0$. The second equation then yields $-F^{T}CF=A_{2k+\alpha-\beta}^{T}B+BA_{2k+\alpha-\beta}$ (in the case $\alpha-\beta+k=1$ as well) and we take $A_{2k+\alpha-\beta}=-\frac{1}{2}B^{-1}F^{T}CF$. 
The first equation for $2k+\alpha-\beta+1\leq n\leq 2(2k+\alpha-\beta)$ further reduces to: 
\begin{align*}\label{eqr2}
&0=A_{j}^{T}B+BA_{j}^{T},\qquad\quad  2k+\alpha-\beta+1\leq j\leq 2(2k+\alpha-\beta) -1 \quad (\textrm{if }\alpha-\beta+k\geq 2 ), \nonumber\\
&0=A_{2(2k+\alpha-\beta)}^{T}B+A_{2k+\alpha-\beta}^{T}B A_{2k+\alpha-\beta}+BA_{2(2k+\alpha-\beta)}.
\end{align*}
Hence we can choose $A_{j}=0$ for $2k+\alpha-\beta+1\leq j\leq 2(2k+\alpha-\beta) -1$ (if $\alpha-\beta+k\geq 2$) and $A_{2(2k+\alpha-\beta)}=-\frac{1}{8}(B^{-1}F^{T}CF)^{2}$. By continuing in this manner we obtain:
\begin{equation}\label{eqA1}
A_{j}=\left\{\begin{array}{ll}
a_{n-1}(B^{-1}F^{T}CF)^{n}, & j=n(2k+\alpha-\beta),\; n\in \mathbb{N}\\
0,                      & \textrm{otherwise}
\end{array}
\right.,
\end{equation}
where $a_0=-\frac{1}{2}$ and $a_{n}=-\frac{1}{2}\sum_{j=0}^{n-1}a_ja_{n-j-1}$ for $n\in \mathbb{N}$. 
The generating function associated with the sequence $a_n$ is $f(t):=\sum_{j=0}^{\infty}a_jt^{j}$. Observe that $f(t)=-\frac{1}{2}t(f(t))^{2}-\frac{1}{2}$, thus $f(t)=-\frac{1}{t}\big(1+(1-t)^{\frac{1}{2}}\big)$ and 
we obtain $a_{n}=-\frac{1}{2^{2n+1}}\frac{1}{n+1}{2n \choose n}$.
For the basic theory of generating functions see e.g. \cite[Chapter 2]{Riordan}).

%
%
We now compare the 
entries in the first row of the left-hand and the right-hand side of (\ref{eqN2}) and get the following equations:
\begin{align}\label{eqr3}
0=&\sum_{j=0}^{n}A_{j}^{T}BG_{n-j},\qquad  0\leq n\leq k-1,\quad (\textrm{if }k\geq 1 )\nonumber \\ 
0=&\sum_{j=0}^{k}A_{j}^{T}BG_{k-j}+F^{T}C, \quad  \\
0=&\sum_{j=0}^{n}A_{j}^{T}BG_{n-j}+F^{T}CD_{n-k}, \quad n\geq k+1,
\nonumber
\end{align}
The first two equations immediately imply
\begin{equation}\label{eqG1}
G_0=\ldots=G_{k-1}=0\quad (\textrm{if }k\geq 1), \qquad G_{k}=-B^{-1}F^{T}C.
\end{equation}

By comparing the 
entries in the first row of the left-hand and the right-hand side of (\ref{eqN3}), we obtain:
\begin{align}\label{DCD}
0=\sum_{j=0}^{n}D_{j}^{T}CD_{n-j}+\sum_{j=\alpha-\beta}^{n}G_{j-(\alpha-\beta)}^{T}BG_{n-1}, \qquad n\geq 1.
\end{align}
Using (\ref{eqG1}) we deduce that the second summand  on the right-hand side of (\ref{DCD}) vanishes for $1\leq n\leq \alpha-\beta+2k-1$, $\alpha-\beta+k\geq 2$, thus
\begin{align*}
&0=\sum_{j=0}^{n}D_{j}^{T}CD_{n-j},\quad 1\leq n\leq \alpha-\beta+2k-1 \quad (\textrm{if }\alpha-\beta+k\geq 2 ), \nonumber \\
&0=\sum_{j=0}^{2k+\alpha-\beta}D_{j}^{T}CD_{2k+\alpha-\beta-j}+G_k^{T}BG_k.\nonumber
\end{align*}
Therefore we choose 
\begin{align}\label{eqD1}
&D_1=\ldots=D_{2k+\alpha-\beta-1}=0\quad (\textrm{if }\alpha-\beta+k\geq 2 ),\\
&D_{2k+\alpha-\beta}=-\frac{1}{2}C^{-1}G_k^{T}BG_k=-\frac{1}{2}FB^{-1}F^{T}C.\nonumber
\end{align}

Using (\ref{eqA1}) and (\ref{eqD1}) for $\alpha-\beta+k\geq 2$,
the last equation of (\ref{eqr3}) reduces to $0=BG_{n}$ for $\alpha-\beta+2k-1\geq n\geq k+1$, hence
\begin{equation}\label{eqG2}
G_{k+1}=\ldots=G_{2k+\alpha-\beta-1}=0\quad (\textrm{if }\alpha-\beta+k\geq 2 ).
\end{equation}
Further, we apply (\ref{eqA1}), (\ref{eqG1}), (\ref{eqD1}), (\ref{eqG2}) to the last equation of (\ref{eqr3}) for $n=\alpha-\beta+2k$. 
If $k\geq 1$ 
we obtain $BG_{2k+\alpha-\beta}=0$, while for $\alpha-\beta\geq 2$, $k=0$ we get
\[
0=BG_{\alpha-\beta}+A_{\alpha-\beta}^{T}BG_0+F^{T}CD_{\alpha-\beta}=BG_{\alpha-\beta}-\tfrac{1}{2}F^{T}CFG_0-\tfrac{1}{2}F^{T}G_0^{T}BG_0=BG_{\alpha-\beta}.
\]
Similarly for $k=0$, $\alpha-\beta=1$ we deduce $BG_1=0$. In any case we have
\begin{equation}\label{G2k}
G_{2k+\alpha-\beta}=0.
\end{equation}

If $\alpha-\beta+k\geq 2$ we use (\ref{eqG1}), (\ref{eqD1}), (\ref{eqG2}), (\ref{G2k}) to see that the second summand on the right-hand side of (\ref{DCD}) for $\alpha-\beta+2k+1\leq n\leq 2(\alpha-\beta)+3k$ vanishes, while the first summand is equal to $D_{n}^{T}C+CD_{n}^{T}$,
thus:
\[
0=D_{n}^{T}C+
CD_{n}^{T}, \qquad \alpha-\beta+2k+1\leq n\leq 2(\alpha-\beta)+3k. \\
\]
We take 
\begin{equation}\label{eqD2}
D_n=0,\qquad \alpha-\beta+2k+1\leq n\leq 2(\alpha-\beta)+3k \qquad (\textrm{if }\alpha-\beta+k\geq 2).
\end{equation}
Using (\ref{eqA1}), (\ref{eqD1}), (\ref{eqD2}), 
the third equation of (\ref{eqr3}) for $\alpha-\beta+2k+1\leq n \leq 2(\alpha-\beta)+3k$ 
reduces to $BG_n=0$; it is clear for $n\neq \alpha-\beta+3k$, while for $n= \alpha-\beta+3k$:
%
\begin{align}\label{eqBGAD}
&0=BG_{\alpha-\beta+3k}+A_{\alpha-\beta+2k}^{T}BG_{k}+F^{T}CD_{\alpha-\beta+2k}\\
&0=BG_{\alpha-\beta+3k}+\tfrac{1}{2}F^{T}CFB^{-1}F^{T}C-\tfrac{1}{2}F^{T}CFB^{-1}F^{T}C=BG_{\alpha-\beta+3k}.\nonumber
\end{align}
It yields:
\begin{equation}\label{eqG3}
G_n=0, \qquad  \alpha-\beta+2k+1\leq n \leq 2(\alpha-\beta)+3k. 
\end{equation}
Equations (\ref{DCD}) for (\ref{eqG1}),(\ref{eqD1}),(\ref{eqG2}),(\ref{eqG3}),(\ref{eqD2}) then give
\begin{align*}
&0=CD_j+D_j^{T}C, \qquad 2(\alpha-\beta)+3k+1\leq j \leq 2(\alpha-\beta+2k)-1, \qquad \\
&0=CD_{2(\alpha-\beta+2k)}+D_{\alpha-\beta+2k}^{T}CD_{\alpha-\beta+2k}+D_{2(\alpha-\beta+2k)}^{T}C.
\end{align*}
We take 
\begin{align}\label{Dabk}
&D_n=0,\qquad 2(\alpha-\beta)+3k+1\leq n \leq 2(\alpha-\beta+2k)-1,\\
&D_{2(\alpha-\beta+2k)}=-\frac{1}{2}D_{\alpha-\beta+2k}^{T}CD_{\alpha-\beta+2k}=-\frac{1}{8}(C^{-1}G_k^{T}BG_k)^{2}.\nonumber
\end{align}
From (\ref{eqr3}) for (\ref{eqA1}),(\ref{eqD1}),(\ref{eqD2}) we further deduce 
\begin{equation}\label{Gabk}
G_{n}=0, \qquad 2(\alpha-\beta)+3k+1 \leq n\leq 2(\alpha-\beta+2k) .
\end{equation}
If $\alpha-\beta=1$, $k=0$ then (\ref{DCD}) yields $0=CD_2+(D_1)^{T}CD_1+D_2^{T}C$ and we choose $D_{2}=-\frac{1}{8}(C^{-1}G_0^{T}BG_0)^{2}$. Further, similarly as in (\ref{eqBGAD}) we apply (\ref{eqr3}) to get $G_2=0$ from $0=BG_{2}+A_{1}^{T}BG_{0}+F^{T}CD_{1}$. Thus (\ref{Dabk}), (\ref{Gabk}) are valid in this case as well.

By continuing this proces we eventually obtain:
\begin{align}\label{eqAA1}
&G_j=\left\{\begin{array}{ll}
B^{-1}F^{T}C, & j=k\\
0,                      & \textrm{otherwise}
\end{array}
\right.,\\
&D_{j}=\left\{\begin{array}{ll}
a_{n-1}(FB^{-1}F^{T}C)^{n}, & j=n(2k+\alpha-\beta)\\
0,                      & \textrm{otherwise}
\end{array}
\right.,\qquad
a_{n}=-\frac{1}{2^{2n+1}}\frac{1}{n+1}\small{{2n \choose n}}.\nonumber
\end{align}
%
%

Next, we compute $(\mathcal{D}_{\alpha,\beta}^{k}(F))^{-1}=\mathcal{B}_{\alpha,\beta}^{-1}\mathcal{F}_{\alpha,\beta}(\mathcal{D}_{\alpha,\beta}^{k}(F))\mathcal{F}_{\alpha,\beta}\mathcal{B}_{\alpha,\beta}
=\begin{bmatrix}
\Delta_{11}' & \Delta_{12}'\\
\Delta_{21}' & \Delta_{22}'
\end{bmatrix}$ with
\begin{align*}
&\Delta_{11}'=T(I_m,A_1',\ldots,A_{\alpha-1}'), \quad 
A_{j}'=\left\{\begin{array}{ll}
a_{n-1}B^{-1}(F^{T}CFB^{-1})^{n}B, & j=n(2k+\alpha-\beta)\\
0,                      & \textrm{otherwise}
\end{array}
\right.,  \\
&\Delta_{22}'=T(I_n,D_1',\ldots,D_{\beta-1}'),\quad
D_{j}'=\left\{\begin{array}{ll}
a_{n-1}C^{-1}(CFB^{-1}F^{T})^{n}C, & j=n(2k+\alpha-\beta)\\
0,                      & \textrm{otherwise}
\end{array}
\right., \\
&\Delta_{21}'=\begin{bmatrix}
0 & N_{\beta}^{k}(F)\\
\end{bmatrix},\qquad
\Delta_{12}'=\begin{bmatrix}
N_{\beta}^{k}(-B^{-1}F^{T}C)\\
0
\end{bmatrix},
\qquad
a_{n}=-\frac{1}{2^{2n+1}}\frac{1}{n+1}{2n \choose n}.
\end{align*}
%
%
%

Set $\mathcal{K}_{p,t}^{k}(F)\in\mathbb{V}_{\mathcal{B}}$ to be an $N\times N$ block matrix such that its principal submatrix formed by blocks in the $p$-th and the $t$-th columns and rows is equal to $\mathcal{D}_{\alpha_p,\alpha_t}^{k}(F)$, while the submatrix formed by all other blocks is the identity matrix. 
Clearly $\mathcal{H}_{p,t}^{k}(F):=(\mathcal{K}_{p,t}^{k}(F))^{-1}$ is of the same form as $\mathcal{K}_{p,t}^{k}(F)$, only with $(\mathcal{D}_{\alpha_p,\alpha_t}^{k}(F))^{-1}$ as a principal submatrix formed by blocks in the $p$-th and the $t$-th columns and rows.
%

We use the inductive procedure of multiplying $\mathcal{Y}\in \mathbb{V}_{\mathcal{B}}$ by matrices of the form $\mathcal{K}_{p,t}^{k}(F)$ for the appropriate $p,t,k,F$. To describe the inductive step, suppose that
during the process we have a matrix that by a slight abuse of notation is still called $\mathcal{Y}$, and such that the blocks under the main diagonal in the first $p-1$ columns vanish (i.e. $\mathcal{Y}_{rs}$ vanishes for $p,r>s$), and the first $\alpha_p-\alpha_{p+1}+k$ columns of $\mathcal{Y}_{rp}$ for $r>p$ vanish.
Let $t$ be the largest index such that $(\mathcal{Y}_{tp})_{1(\alpha_p-\alpha_{p+1}+k+1)}\neq 0$, i.e 
$
(\mathcal{Y}_{tp})_{(1)}=
\begin{bsmallmatrix}
0 & \ldots & 0 & R_{k-\alpha_{p+1}+\alpha_t}^{tp} & \ldots & R_{\alpha_{t}-1}^{tp}
\end{bsmallmatrix}$ with all $R_{j}^{tp}\in \mathbb{C}^{m_t\times m_p}$ and $R_{k-\alpha_{p+1}+\alpha_t}^{tp}\neq 0$. 
We multiply $\mathcal{Y}$ with $\mathcal{K}_{p,t}^{k}(-R_{k-\alpha_{p+1}+\alpha_t}^{tp})$ to get $\mathcal{Y}'$ of the same form as $\mathcal{Y}$, and with $(\mathcal{Y}_{tp}')^{(k+\alpha_t-\alpha_{p+1}+1)}= 0$. It is apparent for $\mathcal{Y}_{rs}'$ with $r, p >s$ or $t>r>s=p$, while for $r\geq t$, $s=p$
we have
\vspace{-1mm}
\begin{align*}
(\mathcal{Y}_{tp}')_{(1)}
=&
\begin{bmatrix}
0 & \ldots & 0 & R_{k-\alpha_{p+1}+\alpha_t}^{tp} & \ldots & R_{\alpha_{t}-1}^{tp}
\end{bmatrix}
T(I_{m_p},A_{1}^{pp},A_{1}^{pp},\ldots,A_{\alpha_p-1}^{pp})\\
&+
T(I_{m_t},A_{1}^{tt},\ldots,A_{\alpha_t-1}^{tt})
\begin{bmatrix}0 &  N_{\alpha_p-\alpha_t}^{k+\alpha_t-\alpha_{p+1}}(-R_{k-(\alpha_{p+1}-\alpha_t)}^{tp})  \end{bmatrix}\\
=& \begin{bmatrix}
0 & \ldots & 0 & S_{k-\alpha_{p+1}+\alpha_t+1}^{tp} & \ldots & R_{\alpha_{t}-1}^{tp}
\end{bmatrix},\\
%
%
%
(\mathcal{Y}_{rp}')_{(1)}
=&
\begin{bmatrix}
0 & \ldots & 0 & R_{k-\alpha_{p+1}+\alpha_t}^{rp} & \ldots & R_{\alpha_{t}-1}^{rp}
\end{bmatrix}
T(I_{m_p},\ldots,A_{\alpha_p-1}^{pp})\\
&+
\begin{bmatrix}
0 & * & \ldots & *
\end{bmatrix}
\begin{bmatrix}
0 & N_{\alpha_p-\alpha_t}^{k+\alpha_t-\alpha_{p+1}}(-R_{k-(\alpha_{p+1}-\alpha_t)}^{tp})  \end{bmatrix}\\
=&  \begin{bmatrix}
0 & \ldots & 0 & S_{k-\alpha_{p+1}+\alpha_t+1}^{rp} & \ldots & R_{\alpha_{t}-1}^{rp}
\end{bmatrix}, \qquad  r>t,
\end{align*}
%
%
%
for some $S_{j}^{sp}\in \mathbb{C}^{m_s\times m_p}$ with $s\in \{r,t\}$.
This process (i.e. choosing the appropriate $\{p_j,t_j,k_j,F_j\}_{j=1}^{n}$) eventually yields a block upper-triangular matrix and it is of the form (\ref{0T0}) such that the blocks on the main diagonal are block upper-triangular Toeplitz with identities on the diagonals; we denote it by $\mathcal{V}$:
\vspace{-2mm}
\[
\mathcal{X}=\mathcal{Q}\mathcal{Y}=\mathcal{Q}\mathcal{V}\prod_{j=1}^{n}(\mathcal{K}_{p_j,t_j}^{k_j}(F_j))^{-1}=\mathcal{Q}\mathcal{V}\prod_{j=1}^{n}\mathcal{H}_{p_j,t_j}^{k_j}(F_j).
\]
%
The inverse of a nonsingular block upper-triangular Toeplitz matrix is again a block upper-triangular Toeplitz, hence $\mathcal{V}^{-1}$ is block upper-triangular. On the other hand $\mathcal{V}$ is a solution of the equation (\ref{eqFYFIY}), so $\mathcal{V}^{-1}=\mathcal{B}^{-1}\mathcal{F}\mathcal{V}\mathcal{F}\mathcal{B}$ is also a block lower-triangular matrix. 
Hence $\mathcal{V}=\oplus_{r=1}^{N}T(I_{m_r},V_1^{r},\ldots,V_{\alpha_r-1}^{r})$;
%
%
the algorithm that provides the solution of (\ref{eqFYFIY}) (see Lemma \ref{EqT}) yields equations that give (\ref{genV}):
%
\begin{align*}
&(B_0^{r})^{T}V^{r}_{1}+(V_1^{r})^{T}B_0^{r}=0,\\
&(B_0^{r})^{T}V^{r}_{n+1}+(V_{n+1}^{r})^{T}B_0^{r}=-\sum_{j=1}^{n}(V_j^{r})^{T}B_0^{r}V_{n+1-j}^{r}, \quad n\geq 1.
\end{align*}
\vspace{-2mm}
This concludes the proof of the lemma. 
\end{proof}

\section{Proof of Theorem \ref{stabs}}\label{PfTh}

We begin with a direct simple proof of Corollary \ref{orbdim},
since the tangent space of $\Orb(A)$ at $A$ (see $T_A$ in (\ref{tans})) is easily computed. Indeed,
%
%
if $Q(t)$ is a complex-differentiable path of orthogonal matrices with $Q(0)=I$, then
\[
\frac{d}{dt}\Big|_{t=0}\big((Q(t))^{T}AQ(t)\big)=(Q'(0))^{T}A+AQ'(0),
\]
and differentiation of $(Q(t))^{T}Q(t)=I$ at $t=0$ yields $(Q'(0))^{T}+Q'(0)=0$; conversely, for any $X=-X^{T}$ we have $e^{0\cdot X}=I$ and $\frac{d}{dt}\big|_{t=0}(e^{tX})=X$.

Observe that the dimension of $T_A$ in (\ref{tans}) is precisely the codimension of the solution space of $X^{T}A+AX=0$ with $X=-X^{T}$ (with respect to the space of all skew-symmetric matrices).
If $J$ is the Jordan form of $A$ with $A=P^{-1}JP$, we get
\[
JY=YJ, \qquad Y=PXP^{-1}, \quad X=-X^{T}.
\]
Thus $Y=[Y_{jk}]_{j,k=1}^{N}$ has rectangle upper-triangular Toeplitz blocks $Y_{jk}$ (see Theorem \ref{posls}), 
and
$Y=-P^{2}Y^{T}P^{-2}$. 
Note that (see e.g. \cite[Theorem 4.4.24]{HornJohn}):
\vspace{-1mm}
\begin{equation}\label{P}
K_{\alpha}(\lambda)=P_{\alpha}J_{\alpha}(\lambda)P_{\alpha}^{-1},\quad P_{\alpha}:=\tfrac{1}{\sqrt{2}}(I_{\alpha}+iE_{\alpha}),\qquad E_{\alpha}=
\begin{bsmallmatrix}
 0                 &      & 1\\
            &   \iddots     &  \\
1            &           &  0\\
\end{bsmallmatrix} \quad (\alpha\textrm{-by-}\alpha),
\end{equation}
%
in which 
$E_{\alpha}$
is the backward identity-matrix (with ones on the anti-diagonal); and $P_{\alpha}^{2}=E_{\alpha}$. If $A$ is of the form (\ref{NF1s}), then $Y=-EY^{T}E$,
in which $E$ is a direct sum of backward identity matrices and it is partitioned conformally to $Y$. 
In view of (\ref{ETE}), (\ref{AkrT}) we further obtain that all $Y_{jj}=0$ and 
$Y_{jk}=
\begin{bsmallmatrix}
T_{jk}\\
0
\end{bsmallmatrix}$, 
$Y_{kj}=
\begin{bsmallmatrix}
0 & T_{kj}
\end{bsmallmatrix}$
are related with $T_{jk}=-T_{kj}$ (both $T_{jk}$, $T_{kj}$ are upper-triangular Toeplitz). Corollary \ref{orbdim} now follows.



We now prove Theorem \ref{stabs}.


\begin{proof}[Proof of Theorem \ref{stabs}]
Given a symmetric matrix $S$ we need to solve the equation
provided as part of the proof of
\begin{equation}\label{SQQS}
SQ= QS,
\end{equation}
where $Q$ is an orthogonal matrix and
provided as part of the proof of
\[
S=
\bigoplus_{r=1}^{N}\Big( \bigoplus_{j=1}^{m_r} K_{\alpha_r}(\lambda)\Big),\qquad \lambda \in \mathbb{C}.
\]
We shall first use Theorem \ref{posls} to solve (\ref{SQQS}) on $Q$. Taking into account that $Q$ satisfies $Q^{T}Q=I$ ($I$ is the identity matrix), it will yield a certain matrix equation and further restricting the form of $Q$; at this point Lemma \ref{EqT} will be applied. 

We have 
\vspace{-1mm}
\begin{equation*}
S=P^{-1}JP,
\qquad
J=\bigoplus_{r=1}^{N}\Big( \bigoplus_{j=1}^{m_r} J_{\alpha_r}(\lambda)\Big),\qquad P=\bigoplus_{r=1}^{N}\Big( \bigoplus_{j=1}^{m_r} P_{\alpha_j}\Big),
\end{equation*}
where $P_{\alpha_j}$ is defined in (\ref{P}).
The equation (\ref{SQQS}) thus transforms to 
\vspace{-1mm}
\[
JX=XJ, \qquad X=PQP^{-1}.
\]

\vspace{-1mm}
From Theorem \ref{posls} (\ref{posls2}) we obtain 
%
%
where $X=[X_{rs}]_{r,s=1}^{N}$ with further $X_{rs}$ is a $m_r\times m_s$ block matrix whoose blocks of dimension $\alpha_r\times \alpha_s$ are of the form 
%
%
%
\vspace{-1mm}
\begin{equation}
\left\{
\begin{array}{ll}
[0\quad T], & \alpha_{r}<\alpha_{s}\\
\begin{bmatrix}
T\\
0
\end{bmatrix}, & \alpha_{r}>\alpha_{s}\\
T,& \alpha_r=\alpha_{s}
\end{array}\right.,
\end{equation}
where $T\in \mathbb{C}^{m\times m}$, $m=\min\{\alpha_r, \alpha_s\}$ is a complex upper-triangular Toeplitz matrix.

Since $P_{\alpha}=P_{\alpha}^T$, $P_{\alpha}^{-1}=\overline{P}_{\alpha}$, 
$P_{\alpha}^{2}=-\overline{P'}_{\alpha}^2=-P_{\alpha}^{-2}=iE_{\alpha}$, we deduce $P^{2}=\overline{P}^2=-P^{-2}=iE$, where $E:=\oplus_{r=1}^{N}\left(\oplus_{j=1}^{m_r} (E_{\alpha_r}) \right)$. Therefore $I=Q^TQ$ if and only if 
\begin{align}\label{QTQS}
I=   & (P^TX^T(P^{-1})^T)(P^{-1}X P )\nonumber\\
 I  = & P(P^TX^T(P^{-1})^T)(P^{-1}X P )P^{-1},\nonumber\\
 I = & P^2X^TP^{-2}X \\
  I = & iE X^{T} (-iE)X\nonumber\\
    I = & E X^{T} E X.\nonumber
\end{align}

\vspace{-2mm}

Proceed by conjugating with the permutation matrix $\Omega=\oplus_{r=1}^{N}\Omega_{\alpha_r,m_r} $ as in (\ref{otxo}):
%
\begin{align}\label{ortoC3S}
 &I = (\Omega^{T}E\Omega)(\Omega^{T} X^{T} \Omega)(\Omega^{T}E\Omega)(\Omega^{T}X\Omega)\\
 &I = \mathcal{F}\mathcal{X}^{T}\mathcal{F}\mathcal{X},\nonumber
\end{align}
%
where $\mathcal{F}=\Omega^{T}E\Omega=\oplus_{j=1}^{N}E_{\alpha_r}(E_{m_r})$ and $\mathcal{X}=\Omega^{T}X\Omega$ (see (\ref{otxo})) are 
of the form (\ref{0T0}) with block rectangular upper-triangular Toeplitz blocks.
%
%
By applying Lemma \ref{EqT} for $\mathcal{B}=\mathcal{C}=I$ and Lemma \ref{lemauni} with $\mathcal{B}=I$ to (\ref{ortoC3S}) we 
conclude the proof.
\end{proof}

\vspace{-2mm}

\begin{remark}
\begin{enumerate}[label=(\arabic*),ref={P\Roman*},wide=0pt,itemsep=2pt]
\item The equation (\ref{QTQS}) is very similar to the equation that we obtained in \cite[Proof of Theorem 1.1]{TSH} when examining orthogonal *congruence of certain Hermitian matrices. However, to compute the isotropy groups under orthogonal *congruence, a more detailed analysis of a few more cases would need to be done (due to the existence of three different types of normal forms). 
 
\item Canonical forms under orthogonal similarity are known for skew-symmet\-ric and orthogonal matrices, too. Using the same general approach as in the case of symmetric matrices,  
isotropy groups 
are described by matrix equations which involve an important difference in comparison to equations that we deal in this paper (Lemma \ref{EqT} and Lemma \ref{lemauni}). We expect that by developing some special techniques, similar results can be obtained. 
\end{enumerate}
\end{remark}

\vspace{-1mm}

\textit{Acknowledgement.}
This research was supported by Slovenian Research Agency (grant no. P1-0291).

\vspace{-3mm}

\end{document}